\documentclass[12pt]{amsart}%
\usepackage{amsfonts}
\usepackage{amsmath}
\usepackage{amssymb}
\usepackage{graphicx}%
\setcounter{MaxMatrixCols}{30}
\newtheorem{theorem}{Theorem}
\theoremstyle{plain}

\newtheorem{corollary}{Corollary}

\newtheorem{definition}{Definition}

\newtheorem{lemma}{Lemma}

\newtheorem{proposition}{Proposition}
\newtheorem{remark}{Remark}

\numberwithin{equation}{section}
\begin{document}
\title[An integral identity and geometric applicaitons]{An integral formula in Kahler geometry with applications}
\author{Xiaodong Wang}
\address{Department of Mathematics, Michigan State University, East Lansing, MI 48824}
\email{xwang@math.msu.edu}

\begin{abstract}
We establish an integral formula on a smooth, precompact domain in a Kahler
manifold. We apply this formula to study holomorphic extension of CR
functions. Using this formula we prove an isoperimetric inequality in terms of
a positive lower bound for the Hermitian curvature of the boundary. Combining
with a Minkowski type formula on the complex hyperbolic space we prove that
any closed, embedded hypersurface of constant mean curvature must be a
geodesic sphere, provided the hypersurface is Hopf. A similar result is
established on the complex projective space.

\end{abstract}
\maketitle

\section{\bigskip Introduction}

Let $M$ be a compact Riemannian manifold with boundary. Given a real $u\in
C^{\infty}\left(  M\right)  $ the following formula was proved by Reilly
\cite{R}
\[
\begin{split}
 & \ \int_{M}\left[  \left(  \Delta u\right)  ^{2}-\left\vert D^{2}u\right\vert
^{2}-Ric\left(  \nabla u,\nabla u\right)  \right]  dv \\
=& \ \int_{\partial M}\left[
2\chi\Delta f+H\chi^{2}+\Pi\left(  \nabla f,\nabla f\right)  \right]  d\sigma,
\end{split}
\]
where $f=u|_{\partial M}$ and $\chi=\frac{\partial u}{\partial\nu}$. It was
used by Reilly \cite{R} to give a simple and elegant proof of the Alexandrov
theorem that a closed, embedded hypersurface of constant mean curvature in
$\mathbb{R}^{n}$ must be a round sphere. Since then Reilly's formula has
become a very useful tool in geometric analysis and is especially effective on
Riemannian manifolds with nonnegative Ricci curvature. For other applications,
see \cite{Ro} and the more recent \cite{MW}.

In this paper we derive a similar formula on a Kahler manifold. Let $M$ be a
Kahler manifold of complex dimension $m+1\geq2$. Suppose $\Omega\subset M$ is
a precompact domain with smooth boundary $\Sigma$. Then $\Sigma$ is a real
hypersurface in $M$. Let $\nu$ be the outer unit normal along $\Sigma$ and
$T=J\nu$. Let $Av=\nabla_{v}\nu$ be the shape operator, $\Pi\left(
v_{1},v_{2}\right)  =\left\langle Av_{1},v_{2}\right\rangle $ the second
fundamental form and $H$ the mean curvature (the trace of $\Pi$). The function
$H_{b}:=H-\Pi\left(  T,T\right)  $ will play a fundamental role in our
discussion. We call $H_{b}$ the Hermitian mean curvature of $\Sigma$. Set
$Z=\left(  \nu-\sqrt{-1}T\right)  /\sqrt{2}$, a $(1,0)$-vector field along
$\Sigma$. For a smooth function $F:\Omega\rightarrow\mathbb{C}$ we prove the
following integral formula%

\begin{equation*}
\begin{split}
& \ \sqrt{2}\int_{\Omega}\left\vert \square F\right\vert ^{2}-\left\vert
D^{1,1}F\right\vert ^{2} \\
= & \ \int_{\Sigma}\left[  Z\overline{F}\left(  \square_{b}f-\sqrt{-1}\Pi\left(
T,X_{\alpha}\right)  f_{\overline{\alpha}}\right)  +ZF\overline{\left(
\square_{b}f-\sqrt{-1}\Pi\left(  T,X_{\alpha}\right)  f_{\overline{\alpha}%
}\right)}  \right] \\
&+ \sqrt{2}\int_{\Sigma}\Pi\left(  \overline{X_{\alpha}},\overline{X_{\beta}%
}\right)  f_{\alpha}\overline{f}_{\beta}  +\frac{1}{\sqrt{2}}\int_{\Sigma}H_{b}\left\vert \overline{Z}F\right\vert
^{2},
\end{split}
\end{equation*}
where $f=F|_{\Sigma}$, $\square_{b}$ is the Kohn Laplacian and $\left\{
X_{\alpha}\right\}  $ a local unitary frame for $T^{1,0}\Sigma$.

A salient feature of the above identity is that the curvature of $M$ does not
appear in it. This makes it effective on a general Kahler manifold. Indeed, we
have found several applications for this formula.

The first application is about when one can extend a CR function on $\Sigma$
to a holomorphic function on $\Omega$. We are able to prove the following

\bigskip

\noindent
{\bf Theorem A.}
{\em Let $\Omega\subset M$ be a connected precompact domain with smooth boundary
$\Sigma$. Suppose $H_b>0$ on $\Sigma$. Then for any $f\in C^{\infty}\left(  \Sigma\right)  $ which is a CR
function, there exists $F\in C^{\infty}\left(  \overline{\Omega}\right)
\cap\mathcal{O}\left(  \Omega\right)  $ s.t. $F|_{\Sigma}=f$.}

Though this result is weaker that a theorem of Kohn-Rossi, our method has the
merit of being elementary.

Another application is the following geometric inequality.

\bigskip

\noindent
{\bf Theorem B.}
{\em Let $\Omega\subset M$ be a connected precompact domain with smooth boundary
$\Sigma$. If $H_{b}>0$ on $\Sigma$, then%
\[
\int_{\Sigma}\frac{1}{H_{b}}\geq\frac{m+1}{m}\left\vert \Omega\right\vert .
\] }

As a corollary we immediately obtain the following isoperimetric inequality in
terms of a positive lower bound for the Hermitian mean curvature.

\bigskip

\noindent
{\bf Theorem C.}
{\em Let $\Omega\subset M$ be a connected precompact domain with smooth boundary
$\Sigma$. Let $c=\inf_{\Sigma}H_{b}$. If $c>0$, then%
\begin{equation}
m\left\vert \Sigma\right\vert \geq c\left(  m+1\right)  \left\vert
\Omega\right\vert . \label{iso-int}%
\end{equation}
}

In the complex hyperbolic space $\mathbb{CH}^{m+1}$ or in the complex
projective space $\mathbb{CP}^{m+1}$ the above inequality is sharp as equality
holds if $\Omega$ is a geodesic ball. Conversely, we are able to prove

\bigskip

\noindent
{\bf Theorem D.}
{\em Let $\Omega\subset\mathbb{CH}^{m+1}$ be a connected precompact domain with
smooth boundary $\Sigma$. If equality holds in (\ref{iso-int}), then $\Omega$
is a geodesic ball.}

\bigskip The same result holds in $\mathbb{CP}^{m+1}$.

In view of the Alexandrov theorem mentioned before it is natural to study
closed hypersurfaces of constant mean curvature in other fundamental
Riemannian manifolds. Alexandrov proved the same uniqueness result for
$\mathbb{H}^{n}$ and $\mathbb{S}_{+}^{n}$ as well in his groundbreaking paper
\cite{A}. More recently, there have been some new spectacular developments.
Brendle \cite{Br1} proved a uniqueness result for hypersurfaces of constant
mean curvature in certain warped product manifolds of which the
deSitter-Schwarzschild spaces are important examples. In another paper
\cite{Br2} he proved that the Clifford torus is the only embedded minimal
torus in $\mathbb{S}^{3}$ up to congruence (the Lawson conjecture). Andrews
and Li \cite{AL} proved that all embedded tori of constant mean curvature in
$\mathbb{S}^{3}$ are rotationally symmetric and therefore completely classified.
Brendle \cite{Br3} further extended these results by proving that certain embedded Weingarten tori in $\mathbb{S}^{3}$,
which include constant mean curvature tori as a special case, must be rotationally symmetric. 

After $\mathbb{R}^{n},\mathbb{H}^{n}$ or $\mathbb{S}^{n}$, the most important
Riemannian manifolds are arguably the symmetric spaces. In a
symmetric space of rank $\geq2$, geodesic spheres do not have constant mean
curvature and it is not clear what to expect. In symmetric spaces of rank one
geodesic spheres do have constant mean curvature (and even constant principal
curvatures). It is natural to ask if they are the only closed, embedded hypersurfaces of
constant mean curvature. The above theorems yield some partial results on this open
problem in $\mathbb{CH}^{m+1}$ and $\mathbb{CP}^{m+1}$. Recall that a hypersurface $\Sigma$ in a Kahler manifold $M$ is
called Hopf if $T$ is an eigenvector of the shape operator $A$ at every point
of $\Sigma$, i.e. $AT=\alpha T$ with $\alpha=\Pi\left(  T,T\right)  $. It is a
well known fact that for a Hopf hypersurface in $\mathbb{CH}^{m+1}$ or in
$\mathbb{CP}^{m+1}$ the function $\alpha$ is constant. We can prove the
following results.

\bigskip

\noindent
{\bf Theorem E.}
{\em Let $\Sigma$ be a closed, embedded hypersurface in the complex hyperbolic
space $\mathbb{CH}^{m+1}$ with constant mean curvature. If $\Sigma$ is Hopf,
then it is a geodesic sphere.}

\bigskip

\noindent
{\bf Theorem F.}
{\em The same result holds for $\mathbb{CP}^{m+1}$ provided that $\Sigma$ is
disjoint from a hyperplane in $\mathbb{CP}^{m+1}$.}

\bigskip 
These results were established by Miquel \cite{M} under an extra condition.
More precisely, he needs to assume $\alpha\geq2\coth\left(  2\coth^{-1}\left(
H_{b}/2m\right)  \right)  $ in the case of $\mathbb{CH}^{m+1}$ and $\alpha
\geq2\cot\left(  2\cot^{-1}\left(  H_{b}/2m\right)  \right)  $ in the case of
$\mathbb{CP}^{m+1}$.

The paper is organized as follows. In Section 2 we prove the integral
identity. In Section 3 we discuss holomorphic extension of CR functions and
prove Theorem A. In Section 4 we present some geometric applications,
including Theorem B and Theorem C. In Section 5 we discuss real hypersurfaces
in $\mathbb{CH}^{m+1}$ and $\mathbb{CP}^{m+1}$ . Finally Theorems D, E and F
are proved in Section 6.

\textbf{Acknowledgement. }The author wishes to thanks Haizhong Li for fruitful
discussions and for his warm hospitality during his visit to Tsinghua
University in July, 2014. He also wants to thank Guofang Wei for her interest and comments.

\section{An integral formula on a domain in a Kahler manifold}

Let $M$ be a Kahler manifold of complex dimension $m+1\geq2$. \ We denote the
metric (extended as a complex bilinear form on the complexified tangent
bundle) by $\left\langle \cdot,\cdot\right\rangle $ and the Levi-Civita
connection by $\nabla$. Let $\Omega\subset M$ be a precompact domain with
smooth boundary. The boundary $\Sigma$ is endowed with the induced metric and
its outer unit normal is denoted by $\nu$. The Levi-Civita connection on
$\Sigma$ will be denoted by $\nabla^{\Sigma}$. The shape operator
$A:T\Sigma\rightarrow T\Sigma$ and the second fundamental form $\Pi$ of
$\Sigma$ are defined in the usual way: for $u,v\in T\Sigma$,
\begin{align*}
Au  &  =\nabla_{u}\nu,\\
\Pi\left(  u,v\right)   &  =\left\langle Au,v\right\rangle .
\end{align*}
Note that $T:=J\nu$ is a unit tangent vector field on $\Sigma$. Denote
$Z=\left(  \nu-\sqrt{-1}T\right)  /\sqrt{2}$.

There is a canonical CR structure on $\Sigma$: the distribution $\mathcal{H}%
=\left\{  u\in T\Sigma:\left\langle u,T\right\rangle =0\right\}  $ is
invariant under the complex structure $J$ and therefore $\mathcal{H}%
\otimes\mathbb{C=}T^{1,0}\Sigma\oplus T^{0,1}\Sigma$, where
\[
T^{1,0}\Sigma=\{u-\sqrt{-1}Ju:u\in\mathcal{H}\},T^{0,1}\Sigma=\overline
{T^{1,0}\Sigma}.
\]
We consider on $\Sigma$ the $1$-form $\theta=\left\langle T,\cdot\right\rangle
.$ Its Levi form $L$ is the Hermitian symmetric form on $T^{1,0}\Sigma$
defined by $L\left(  X,Y\right)  =-\sqrt{-1}d\theta\left(  X,\overline
{Y}\right)  $. A simple calculation yields%
\[
L\left(  X,Y\right)  =2\Pi\left(  X,\overline{Y}\right)  .
\]
Recall that $\Sigma$ is strictly pseudoconvex if the Levi form is positive
definite on $T^{1,0}\Sigma$. We denote by $H_{b}$ the trace of $L$ with
respect to the Hermitian metric on $T^{1,0}\Sigma$. A simple calculation shows
that
\[
H_{b}=H-\Pi\left(  T,T\right)  ,
\]
where $H$ is the mean curvature of $\Sigma$. In other words, $H_{b}$ is the
trace of $\Pi$ on the contact distribution $\mathcal{H}$. (The notation
$H_{b}$ is in analogy with the sub-Laplacian $\Delta_{b}$ in CR geometry,
which equals the trace of a Riemannian Hessian on the contact distribution
$\mathcal{H}$.)

For $F\in C^{\infty}\left(  \overline{\Omega}\right)  $ we denote by
$D^{1,1}F$ its complex Hessian and $\square F=-\overline{\partial}^{\ast
}\overline{\partial}F$ its complex Laplacian (which is half of the real
Laplacian). Let $\overline{\partial}_{b}:C^{\infty}\left(  \Sigma\right)
\rightarrow\mathcal{A}^{0,1}\left(  \Sigma\right)  $ be the tangential
Cauchy-Riemann operator, $\overline{\partial}_{b}^{\ast}:\mathcal{A}%
^{0,1}\left(  M\right)  \rightarrow C^{\infty}\left(  \Sigma\right)  $ its
dual and $\square_{b}=-\overline{\partial}_{b}^{\ast}\overline{\partial}_{b}$
the Kohn Laplacian. In doing computations we always work with a local unitary
frame $\left\{  X_{i}:0\leq i\leq m\right\}  $ for $T^{1,0}M$. We write%
\[
F_{i}=X_{i}F,F_{\overline{i}}=\overline{X_{i}}F,F_{i,\overline{j}}%
=X_{i}\overline{X_{j}}f-\nabla_{X_{i}}\overline{X_{j}}F
\]
etc. We will implicitly use the following rules%
\begin{align*}
F_{i,\overline{j}}  &  =F_{\overline{j},i},F_{i,\overline{j}\overline{k}%
}=F_{i,\overline{k}\overline{j}},\\
F_{i,\overline{j}k}  &  =F_{i,k\overline{j}}+R_{k\overline{j}i\overline{l}%
}F_{l}.
\end{align*}

Along $\Sigma$ we may and will assume that $X_{0}=Z$. Then $\left\{
X_{\alpha}:1\leq\alpha\leq m\right\}  $ is a local unitary frame for
$T^{1,0}\Sigma$. In the following Greek indices range from $1$ to $m$ while
Latin letters run form $0$ to $m$.

\begin{lemma}
\label{Kohn}For $f\in C^{\infty}\left(  \Sigma\right)  $ we have%
\[
\square_{b}f=X_{\alpha}\overline{X_{\alpha}}f-\left\langle \nabla_{X_{\alpha}%
}^{\Sigma}\overline{X_{\alpha}},X_{\beta}\right\rangle \overline{X_{\beta}%
}f+\sqrt{-1}\Pi\left(  T,X_{\alpha}\right)  \overline{X_{\alpha}}f.
\]

\end{lemma}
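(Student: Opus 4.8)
The plan is to test $\square_b f$ against an arbitrary smooth function, integrate by parts over the closed hypersurface $\Sigma$, and thereby reduce the statement to a pointwise computation of the divergence of the frame fields $X_\alpha$. First I would use the definition $\square_b=-\overline{\partial}_b^{\ast}\overline{\partial}_b$ together with the $L^2$-adjoint property of $\overline{\partial}_b^{\ast}$: for every $g\in C^{\infty}(\Sigma)$,
\[
\int_{\Sigma}(\square_b f)\,\overline{g}=-\int_{\Sigma}\langle\overline{\partial}_b f,\overline{\partial}_b g\rangle=-\sum_{\alpha}\int_{\Sigma}(\overline{X_{\alpha}}f)(X_{\alpha}\overline{g}),
\]
since in the unitary coframe $\overline{\partial}_b f$ has components $\overline{X_{\alpha}}f$ and $\overline{(\overline{X_{\alpha}}g)}=X_{\alpha}\overline{g}$. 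Applying the divergence theorem on the closed manifold $\Sigma$ to the function $(\overline{X_{\alpha}}f)\overline{g}$ moves the derivative $X_{\alpha}$ off of $\overline{g}$ and produces a divergence term, which gives
\[
\square_b f=\sum_{\alpha}X_{\alpha}\overline{X_{\alpha}}f+\sum_{\alpha}(\operatorname{div}_{\Sigma}X_{\alpha})\,\overline{X_{\alpha}}f .
\]
As $g$ is arbitrary, the entire lemma reduces to the pointwise identity $\operatorname{div}_{\Sigma}X_{\alpha}=-\langle\nabla^{\Sigma}_{X_{\beta}}\overline{X_{\beta}},X_{\alpha}\rangle+\sqrt{-1}\,\Pi(T,X_{\alpha})$.

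To compute the divergence I would expand it in the complex frame $\{X_{\beta},\overline{X_{\beta}},T\}$ of $T\Sigma\otimes\mathbb{C}$, namely
\[
\operatorname{div}_{\Sigma}X_{\alpha}=\sum_{\beta}\langle\nabla^{\Sigma}_{X_{\beta}}X_{\alpha},\overline{X_{\beta}}\rangle+\sum_{\beta}\langle\nabla^{\Sigma}_{\overline{X_{\beta}}}X_{\alpha},X_{\beta}\rangle+\langle\nabla^{\Sigma}_{T}X_{\alpha},T\rangle ,
\]
which one checks agrees with the trace of $\nabla^{\Sigma}X_{\alpha}$ against a real orthonormal frame because $\{X_{\beta}\}$ is unitary. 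For the first sum I would differentiate the constant $\langle X_{\alpha},\overline{X_{\beta}}\rangle=\delta_{\alpha\beta}$ along $X_{\beta}$ and use metric compatibility to obtain $\langle\nabla^{\Sigma}_{X_{\beta}}X_{\alpha},\overline{X_{\beta}}\rangle=-\langle\nabla^{\Sigma}_{X_{\beta}}\overline{X_{\beta}},X_{\alpha}\rangle$, which is exactly the Christoffel-type term appearing in the statement.

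The crux is the remaining two sums, and this is where the Kahler condition does the work. For the $T^{0,1}\Sigma$-directions I would use the Gauss formula to replace $\nabla^{\Sigma}_{\overline{X_{\beta}}}X_{\alpha}$ by the ambient $\nabla_{\overline{X_{\beta}}}X_{\alpha}$ when pairing against $X_{\beta}\perp\nu$; since $\nabla J=0$ the ambient connection preserves types, so $\nabla_{\overline{X_{\beta}}}X_{\alpha}$ is again of type $(1,0)$, and the complex-bilinear metric pairs two $(1,0)$ vectors to zero, so this entire sum vanishes. I expect this to be the main obstacle, as it is precisely the point at which the curvature of $M$ drops out of the final formula. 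For the $T$-direction I would write $\langle\nabla^{\Sigma}_{T}X_{\alpha},T\rangle=-\langle\nabla^{\Sigma}_{T}T,X_{\alpha}\rangle=-\langle\nabla_{T}T,X_{\alpha}\rangle$, compute $\nabla_{T}T=\nabla_{T}(J\nu)=J(\nabla_{T}\nu)=J(AT)$ using $\nabla J=0$, and finish with the skew-symmetry of $J$ and $JX_{\alpha}=\sqrt{-1}X_{\alpha}$ to get $-\langle J(AT),X_{\alpha}\rangle=\sqrt{-1}\langle AT,X_{\alpha}\rangle=\sqrt{-1}\,\Pi(T,X_{\alpha})$. Assembling the three contributions yields the claimed expression for $\square_b f$.
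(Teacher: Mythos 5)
Your proof is correct and follows essentially the same route as the paper: both characterize $\square_b$ by pairing against a test function, integrating by parts on the closed hypersurface $\Sigma$, and reducing to a divergence computation in which metric compatibility of $\nabla^{\Sigma}$ produces the Christoffel-type term and $\nabla_T T=J(AT)$ produces the $\sqrt{-1}\,\Pi(T,X_\alpha)$ term. Your explicit check that the $\nabla^{\Sigma}_{\overline{X_\beta}}$-contributions vanish for type reasons (the Kahler condition makes $\nabla_{\overline{X_\beta}}X_\alpha$ of type $(1,0)$, and two $(1,0)$-vectors pair to zero under the complex-bilinear metric) fills in a step that the paper's divergence expansion leaves implicit.
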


\begin{proof}
For $f,g\in C^{\infty}\left(  \Sigma\right)  $ we have%
\[
\left\langle \overline{\partial}_{b}f,\overline{\overline{\partial}_{b}%
g}\right\rangle =f_{\overline{\alpha}}\overline{g}_{\alpha}=X_{\alpha}\left(
f_{\overline{\alpha}}\overline{g}\right)  -\left(  X_{\alpha}f_{\overline
{\alpha}}\right)  \overline{g}.
\]
Consider the vector field $\Xi=\phi_{\overline{\alpha}}X_{\alpha}$ with
$\phi_{\overline{\alpha}}=f_{\overline{\alpha}}\overline{g}$. We compute%
\begin{align*}
\mathrm{div}\Xi &  =\left\langle \nabla_{X_{\beta}}^{\Sigma}\Xi,\overline
{X_{\beta}}\right\rangle +\left\langle \nabla_{T}^{\Sigma}\Xi,T\right\rangle
\\
&  =X_{\alpha}\phi_{\overline{\alpha}}-\left\langle \Xi,\nabla_{X_{\beta}%
}^{\Sigma}\overline{X_{\beta}}\right\rangle -\left\langle \Xi,\nabla
_{T}T\right\rangle \\
&  =X_{\alpha}\phi_{\overline{\alpha}}-\left\langle \nabla_{X_{\beta}%
}\overline{X_{\beta}},X_{\alpha}\right\rangle \phi_{\overline{\alpha}%
}-\left\langle J\nabla_{T}\nu,X_{\alpha}\right\rangle \phi_{\overline{\alpha}%
}\\
&  =X_{\alpha}\phi_{\overline{\alpha}}-\left\langle \nabla_{X_{\beta}%
}\overline{X_{\beta}},X_{\alpha}\right\rangle \phi_{\overline{\alpha}}%
+\sqrt{-1}\Pi\left(  T,X_{\alpha}\right)  \phi_{\overline{\alpha}}.
\end{align*}
Therefore%
\begin{align*}
\left\langle \overline{\partial}_{b}f,\overline{\overline{\partial}_{b}%
g}\right\rangle  &  =\mathrm{div}\Xi-\left[  \left(  X_{\alpha}f_{\overline
{\alpha}}\right)  \overline{g}-\left\langle \nabla_{X_{\beta}}\overline
{X_{\beta}},X_{\alpha}\right\rangle \phi_{\overline{\alpha}}+\sqrt{-1}%
\Pi\left(  T,X_{\alpha}\right)  \phi_{\overline{\alpha}}\right] \\
&  =\mathrm{div}\Xi-\left[  \left(  X_{\alpha}f_{\overline{\alpha}}\right)
-\left\langle \nabla_{X_{\beta}}\overline{X_{\beta}},X_{\alpha}\right\rangle
f_{\overline{\alpha}}+\sqrt{-1}\Pi\left(  T,X_{\alpha}\right)  f_{\overline
{\alpha}}\right]  \overline{g}%
\end{align*}
Integrating by parts yields%
\[
\int_{\Sigma}\left\langle \overline{\partial}_{b}f,\overline{\overline
{\partial}_{b}g}\right\rangle =-\int_{\Sigma}\left[  X_{\alpha}f_{\overline
{\alpha}}-\left\langle \nabla_{X_{\beta}}\overline{X_{\beta}},X_{\alpha
}\right\rangle f_{\overline{\alpha}}+\sqrt{-1}\Pi\left(  T,X_{\alpha}\right)
f_{\overline{\alpha}}\right]  \overline{g}.
\]

\end{proof}

\begin{lemma}
\bigskip We have $\mathrm{div}T=0$.
\end{lemma}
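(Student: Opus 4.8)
The plan is to compute the intrinsic divergence $\mathrm{div}\,T=\mathrm{div}_{\Sigma}T$ directly in an orthonormal frame and to exploit the two structural facts at hand: the shape operator $A$ is symmetric while the complex structure $J$ is skew, and on a Kahler manifold $J$ is parallel. First I would fix $p\in\Sigma$, choose an orthonormal frame $\left\{  e_{i}\right\}  $ of $T_{p}\Sigma$, and write
\[
\mathrm{div}\,T=\sum_{i}\left\langle \nabla_{e_{i}}^{\Sigma}T,e_{i}\right\rangle =\sum_{i}\left\langle \nabla_{e_{i}}T,e_{i}\right\rangle ,
\]
the second equality holding because pairing against the tangential $e_{i}$ discards the normal part of the ambient derivative. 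Then, using $T=J\nu$ together with $\nabla J=0$ (this is the only place the Kahler condition enters) and $\nabla_{e_{i}}\nu=Ae_{i}$, I would rewrite $\nabla_{e_{i}}T=J\nabla_{e_{i}}\nu=JAe_{i}$, so that $\mathrm{div}\,T=\sum_{i}\left\langle JAe_{i},e_{i}\right\rangle $; the divergence is thus exactly the full contraction of $J$ with the shape operator.

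The concluding step is to observe that this contraction vanishes by symmetry. Writing $Ae_{i}=\sum_{j}\Pi_{ij}e_{j}$, which is legitimate since $Ae_{i}$ is tangent to $\Sigma$, gives
\[
\mathrm{div}\,T=\sum_{i,j}\Pi_{ij}\left\langle Je_{j},e_{i}\right\rangle .
\]
The matrix $\Pi_{ij}=\Pi\left(  e_{i},e_{j}\right)  $ is symmetric, whereas $\left\langle Je_{j},e_{i}\right\rangle =-\left\langle e_{j},Je_{i}\right\rangle =-\left\langle Je_{i},e_{j}\right\rangle $ is antisymmetric under $i\leftrightarrow j$, and the contraction of a symmetric with an antisymmetric array is zero. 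This is the entire argument.

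There is essentially no hard step; the only point demanding care is bookkeeping of what ``divergence'' means here, namely the intrinsic divergence on $\Sigma$, so that only the tangential component of $\nabla_{e_{i}}T$ survives and the index $i$ ranges over all of $T\Sigma$ (including the $T$-direction), with no separate contribution from $\nu$. As a consistency check one may note that the $T$-row of the contraction is individually harmless, since $\left\langle JAT,T\right\rangle =-\left\langle AT,JT\right\rangle =\left\langle AT,\nu\right\rangle =0$ because $AT\in T\Sigma$ and $JT=-\nu$. An equivalent route, which I would mention but not adopt, is to extend $\nu=\nabla\rho$ by the signed distance function, note that $\mathrm{div}_{M}\left(  J\nabla\rho\right)  =\mathrm{tr}\left(  J\,\mathrm{Hess}\,\rho\right)  =0$ for the same symmetry reason, and correct by the normal term $\left\langle \nabla_{\nu}T,\nu\right\rangle =\left\langle J\nabla_{\nu}\nu,\nu\right\rangle =0$ since the $\nu$-curves are geodesics; the frame computation above is the cleaner of the two.
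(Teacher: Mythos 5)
Your proof is correct and follows essentially the same route as the paper: both reduce $\mathrm{div}\,T$ to the full contraction of the symmetric shape operator with the skew endomorphism $J$ (using $T=J\nu$ and $\nabla J=0$), which vanishes by symmetry. The only cosmetic difference is that you use a real orthonormal frame of $T\Sigma$, whereas the paper uses the unitary frame $\left\{  X_{\alpha},\overline{X_{\alpha}},T\right\}$ adapted to the CR structure, so the cancellation there appears as $\sqrt{-1}\left(  \Pi\left(  X_{\alpha},\overline{X_{\alpha}}\right)  -\Pi\left(  \overline{X_{\alpha}},X_{\alpha}\right)  \right)  =0$.
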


\begin{proof}
Since $T$ is of unit length, $\left\langle \nabla_{T}^{\Sigma}T,T\right\rangle
=0$. We compute%
\begin{align*}
\mathrm{div}T  &  =\left\langle \nabla_{X_{\alpha}}^{\Sigma}T,\overline
{X_{\alpha}}\right\rangle +\left\langle \nabla_{\overline{X_{\alpha}}}%
^{\Sigma}T,X_{\alpha}\right\rangle +\left\langle \nabla_{T}^{\Sigma
}T,T\right\rangle \\
&  =\left\langle \nabla_{X_{\alpha}}T,\overline{X_{\alpha}}\right\rangle
+\left\langle \nabla_{\overline{X_{\alpha}}}T,X_{\alpha}\right\rangle \\
&  =\left\langle J\nabla_{X_{\alpha}}\nu,\overline{X_{\alpha}}\right\rangle
+\left\langle J\nabla_{\overline{X_{\alpha}}}\nu,X_{\alpha}\right\rangle \\
&  =-\left\langle \nabla_{X_{\alpha}}\nu,J\overline{X_{\alpha}}\right\rangle
-\left\langle \nabla_{\overline{X_{\alpha}}}\nu,JX_{\alpha}\right\rangle \\
&  =\sqrt{-1}\left(  \left\langle \nabla_{X_{\alpha}}\nu,\overline{X_{\alpha}%
}\right\rangle -\left\langle \nabla_{\overline{X_{\alpha}}}\nu,X_{\alpha
}\right\rangle \right) \\
&  =\sqrt{-1}\left(  \Pi\left(  X_{\alpha},\overline{X_{\alpha}}\right)
-\Pi\left(  \overline{X_{\alpha}},X_{\alpha}\right)  \right) \\
&  =0.
\end{align*}

\end{proof}

\bigskip For later purposes we compare $\square_{b}f$ with $\Delta_{\Sigma}f$.

\begin{proposition}
For $f\in C^{\infty}\left(  \Sigma\right)  $ we have%
\begin{equation}
2\square_{b}f=\Delta_{\Sigma}f-D^{2}f\left(  T,T\right)  +\sqrt{-1}\left[
2\Pi\left(  T,X_{\alpha}\right)  \overline{X_{\alpha}}f-H_{b}Tf\right]  .
\label{compare}%
\end{equation}

\end{proposition}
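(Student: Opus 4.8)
The plan is to compute the Laplace--Beltrami operator $\Delta_{\Sigma}$ in the very same unitary frame used for $\square_{b}$ and to match the two expressions term by term. Since $\left\{ X_{\alpha}\right\} $ is unitary for $T^{1,0}\Sigma$, I would first pass to the associated real orthonormal frame $\left\{ e_{1},\dots,e_{2m},T\right\} $ of $T\Sigma$, writing $X_{\alpha}=\left( e_{2\alpha-1}-\sqrt{-1}e_{2\alpha}\right) /\sqrt{2}$ so that $\left\{ e_{1},\dots,e_{2m}\right\} $ spans the contact distribution $\mathcal{H}$ and $\left\langle X_{\alpha},\overline{X_{\beta}}\right\rangle =\delta_{\alpha\beta}$. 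Expanding $\Delta_{\Sigma}f=\sum_{a}D^{2}f\left( e_{a},e_{a}\right) +D^{2}f\left( T,T\right) $ and using that $D^{2}f$ is a symmetric $2$-tensor, the contact part collapses by bilinearity to $\sum_{a}D^{2}f\left( e_{a},e_{a}\right) =2\sum_{\alpha}D^{2}f\left( X_{\alpha},\overline{X_{\alpha}}\right) $. This produces the clean starting identity
\[
\Delta_{\Sigma}f-D^{2}f\left( T,T\right) =2\sum_{\alpha}\left[ X_{\alpha}\overline{X_{\alpha}}f-\left( \nabla_{X_{\alpha}}^{\Sigma}\overline{X_{\alpha}}\right) f\right] .
\]

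Next I would expand the connection term $\left( \nabla_{X_{\alpha}}^{\Sigma}\overline{X_{\alpha}}\right) f$ in the frame $\left\{ X_{\beta},\overline{X_{\beta}},T\right\} $, which splits it into a $(1,0)$-part $\sum_{\beta}\left\langle \nabla_{X_{\alpha}}^{\Sigma}\overline{X_{\alpha}},\overline{X_{\beta}}\right\rangle X_{\beta}f$, a $(0,1)$-part $\sum_{\beta}\left\langle \nabla_{X_{\alpha}}^{\Sigma}\overline{X_{\alpha}},X_{\beta}\right\rangle \overline{X_{\beta}}f$, and a Reeb part $\left\langle \nabla_{X_{\alpha}}^{\Sigma}\overline{X_{\alpha}},T\right\rangle Tf$. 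The $(0,1)$-part is exactly the connection term appearing in Lemma \ref{Kohn}, so it will cancel against $2\square_{b}f$, while the explicit terms $\sqrt{-1}\Pi\left( T,X_{\alpha}\right) \overline{X_{\alpha}}f$ on the two sides also match directly. The remaining task is therefore to show that the $(1,0)$-part vanishes after summation over $\alpha$ and that the Reeb part supplies the $H_{b}Tf$ term.

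The crux, and the step I expect to be the main obstacle, is the vanishing of the $(1,0)$-part, since this is where the Kahler hypothesis enters decisively. Using $\left\langle \overline{X_{\alpha}},\overline{X_{\beta}}\right\rangle =0$ I would rewrite $\left\langle \nabla_{X_{\alpha}}^{\Sigma}\overline{X_{\alpha}},\overline{X_{\beta}}\right\rangle =-\left\langle \overline{X_{\alpha}},\nabla_{X_{\alpha}}^{\Sigma}\overline{X_{\beta}}\right\rangle $. Because $M$ is Kahler we have $\nabla J=0$, so the ambient derivative $\nabla_{X_{\alpha}}\overline{X_{\beta}}$ is again of type $(0,1)$; along $\Sigma$ one has $T^{0,1}M=T^{0,1}\Sigma\oplus\mathbb{C}\overline{Z}$, and tangential projection only adds a multiple of $T$. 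Since $\left\langle \overline{X_{\alpha}},\overline{X_{\gamma}}\right\rangle =0$ and $\left\langle \overline{X_{\alpha}},T\right\rangle =0$, the pairing $\left\langle \overline{X_{\alpha}},\nabla_{X_{\alpha}}^{\Sigma}\overline{X_{\beta}}\right\rangle $ vanishes, so the entire $(1,0)$-part drops out.

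Finally, for the Reeb part I would compute $\left\langle \nabla_{X_{\alpha}}^{\Sigma}\overline{X_{\alpha}},T\right\rangle =-\left\langle \overline{X_{\alpha}},\nabla_{X_{\alpha}}T\right\rangle $ and, using $T=J\nu$, $\nabla_{X_{\alpha}}\nu=AX_{\alpha}$, and $J\overline{X_{\alpha}}=-\sqrt{-1}\overline{X_{\alpha}}$, reduce it to $-\sqrt{-1}\Pi\left( X_{\alpha},\overline{X_{\alpha}}\right) $. Summing over $\alpha$ and recalling $H_{b}=2\sum_{\alpha}\Pi\left( X_{\alpha},\overline{X_{\alpha}}\right) $ gives $\sum_{\alpha}\left\langle \nabla_{X_{\alpha}}^{\Sigma}\overline{X_{\alpha}},T\right\rangle =-\tfrac{\sqrt{-1}}{2}H_{b}$. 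Substituting the three parts back into the starting identity, doubling, and comparing with $2\square_{b}f$ from Lemma \ref{Kohn}: the second-order terms agree, the $(0,1)$ connection terms cancel, the $(1,0)$-part is zero, the explicit $\sqrt{-1}\Pi\left( T,X_{\alpha}\right) \overline{X_{\alpha}}f$ terms match, and the Reeb contribution supplies precisely the term $-\sqrt{-1}H_{b}Tf$, yielding \eqref{compare}.
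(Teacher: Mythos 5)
Your proposal is correct and is essentially the paper's own argument run in the opposite direction: the paper starts from Lemma \ref{Kohn}, decomposes $\nabla_{X_{\alpha}}^{\Sigma}\overline{X_{\alpha}}$ in the frame $\{X_{\beta},\overline{X_{\beta}},T\}$, uses the Kahler condition to discard the $(1,0)$-component, and converts the Reeb coefficient into $-\tfrac{\sqrt{-1}}{2}H_{b}$ via $\langle\nabla_{X_{\alpha}}^{\Sigma}T,\overline{X_{\alpha}}\rangle=\sqrt{-1}\,\Pi(X_{\alpha},\overline{X_{\alpha}})$, exactly as you do. The only difference is cosmetic (you begin from $\Delta_{\Sigma}f$ and you spell out the vanishing of the $(1,0)$-part, which the paper leaves implicit).
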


\begin{proof}
From the previous lemma we have%
\begin{align*}
\square_{b}f  &  =X_{\alpha}\overline{X_{\alpha}}f-\nabla_{X_{\alpha}}%
^{\Sigma}\overline{X_{\alpha}}f+\left\langle \nabla_{X_{\alpha}}^{\Sigma
}\overline{X_{\alpha}},T\right\rangle Tf+\sqrt{-1}\Pi\left(  T,X_{\alpha
}\right)  \overline{X_{\alpha}}f\\
&  =D^{2}f\left(  X_{\alpha},\overline{X_{\alpha}}\right)  -\left\langle
\nabla_{X_{\alpha}}^{\Sigma}T,\overline{X_{\alpha}}\right\rangle Tf+\sqrt
{-1}\Pi\left(  T,X_{\alpha}\right)  \overline{X_{\alpha}}f\\
&  =D^{2}f\left(  X_{\alpha},\overline{X_{\alpha}}\right)  -\sqrt{-1}%
\Pi\left(  X_{\alpha},\overline{X_{\alpha}}\right)  Tf+\sqrt{-1}\Pi\left(
T,X_{\alpha}\right)  \overline{X_{\alpha}}f\\
&  =\frac{1}{2}\left(  \Delta_{\Sigma}f-D^{2}f\left(  T,T\right)  \right)
-\frac{\sqrt{-1}}{2}H_{b}Tf+\sqrt{-1}\Pi\left(  T,X_{\alpha}\right)
\overline{X_{\alpha}}f.
\end{align*}
This yields the desired identity.
\end{proof}

We now state again our integral formula in a Kahler manifold.

\begin{theorem}
For $F\in C^{\infty}\left(  \overline{\Omega}\right)  $ denote $f=F|_{\Sigma}%
$. Then%
\begin{equation}
\begin{split}
& \  \sqrt{2}\int_{\Omega}^{2}\left\vert \square F\right\vert ^{2}-\left\vert
D^{1,1}F\right\vert ^{2}\label{id}\\
= & \ \int_{\Sigma}\left[  Z\overline{F}\left(  \square_{b}f-\sqrt{-1}\Pi\left(
T,X_{\alpha}\right)  f_{\overline{\alpha}}\right)  +ZF\overline{\left(
\square_{b}f-\sqrt{-1}\Pi\left(  T,X_{\alpha}\right)  f_{\overline{\alpha}%
}\right)  } \right] \nonumber\\
& +\sqrt{2}\int_{\Sigma}\Pi\left(  \overline{X_{\alpha}},\overline{X_{\beta}%
}\right)  f_{\alpha}\overline{f}_{\beta} +\frac{1}{\sqrt{2}}\int_{\Sigma}H_{b}\left\vert \overline{Z}F\right\vert
^{2}.\nonumber
\end{split}
\end{equation}

\end{theorem}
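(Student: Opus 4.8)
The plan is to exhibit the left-hand side as the integral of a pointwise divergence and then convert the resulting boundary integral into the three stated intrinsic quantities. Working in the ambient unitary frame $\{X_i\}$, I would introduce the $(1,0)$ vector field $W=W^{j}X_{j}$ with
\[
W^{j}=\left(  \square F\right)  \overline{F_{j}}-F_{i,\overline{j}}\,\overline{F_{i}}.
\]
Because $\nabla$ preserves type on a Kahler manifold, $\langle\nabla_{\overline{X_{i}}}W,X_{i}\rangle=0$, so the real divergence of $W$ coincides with its holomorphic divergence $W^{j}{}_{,j}$. Differentiating and using $\overline{\square F}=(\overline{F_{i}})_{,i}$ and $\sum_{i,j}F_{i,\overline{j}}\,\overline{F_{i,\overline{j}}}=|D^{1,1}F|^{2}$ gives $W^{j}{}_{,j}=F_{i,\overline{i}j}\overline{F_{j}}+|\square F|^{2}-F_{i,\overline{j}j}\overline{F_{i}}-|D^{1,1}F|^{2}$. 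The first main point is that $F_{i,\overline{i}j}\overline{F_{j}}-F_{i,\overline{j}j}\overline{F_{i}}$ vanishes: applying the commutation rule $F_{i,\overline{j}k}=F_{i,k\overline{j}}+R_{k\overline{j}i\overline{l}}F_{l}$ to both terms produces curvature contributions $Ric_{j\overline{l}}F_{l}\overline{F_{j}}$ and $Ric_{i\overline{l}}F_{l}\overline{F_{i}}$ that cancel (this is exactly why no curvature survives), while the remaining Hessian terms $F_{i,j\overline{i}}\overline{F_{j}}-F_{i,j\overline{j}}\overline{F_{i}}$ cancel by the symmetry $F_{i,j}=F_{j,i}$ upon relabeling $i\leftrightarrow j$. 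Hence $\operatorname{div}(W+\overline{W})=2(|\square F|^{2}-|D^{1,1}F|^{2})$.

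Next I would apply the divergence theorem. Along $\Sigma$ one has $\nu=(Z+\overline{Z})/\sqrt{2}$ and $X_{0}=Z$, so $\langle X_{j},\nu\rangle=\tfrac{1}{\sqrt{2}}\delta_{j0}$ and therefore $\langle W+\overline{W},\nu\rangle=\sqrt{2}\,\operatorname{Re}W^{0}$. This yields $\sqrt{2}\int_{\Omega}|\square F|^{2}-|D^{1,1}F|^{2}=\int_{\Sigma}\operatorname{Re}W^{0}$. A useful simplification then occurs: writing $\square F=F_{0,\overline{0}}+F_{\alpha,\overline{\alpha}}$, the normal-normal Hessian $F_{0,\overline{0}}$ drops out of $W^{0}$, leaving $W^{0}=F_{\alpha,\overline{\alpha}}\,\overline{F_{0}}-F_{\alpha,\overline{0}}\,\overline{F_{\alpha}}$, so only tangential second derivatives appear in the boundary term.

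The real work is the boundary expansion, which consists of rewriting the ambient Hessians $F_{\alpha,\overline{\alpha}}$ and $F_{\alpha,\overline{0}}$ in terms of intrinsic data on $\Sigma$ by splitting $\nabla$ into $\nabla^{\Sigma}$ plus second fundamental form terms. For $F_{\alpha,\overline{\alpha}}$ this is precisely Lemma \ref{Kohn}: it contributes $\square_{b}f-\sqrt{-1}\,\Pi(T,X_{\alpha})f_{\overline{\alpha}}$, together with correction terms $\langle\nabla_{X_{\alpha}}^{\Sigma}\overline{X_{\alpha}},\overline{X_{\beta}}\rangle f_{\beta}$, a $Tf$ term, and the normal term $\tfrac{1}{2}H_{b}\,\nu F$ coming from $\sum_{\alpha}\Pi(X_{\alpha},\overline{X_{\alpha}})=\tfrac{1}{2}H_{b}$. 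For $F_{\alpha,\overline{0}}$ one computes $\nabla_{X_{\alpha}}\overline{Z}=\sqrt{2}\,(AX_{\alpha})^{0,1}$, whose $\overline{X_{\beta}}$ component is $\Pi(X_{\alpha},X_{\beta})$; after the conjugation producing $\operatorname{Re}W^{0}$ this generates the term $\Pi(\overline{X_{\alpha}},\overline{X_{\beta}})f_{\alpha}\overline{f}_{\beta}$, while its $\overline{Z}$ component combines with the normal term above to feed $H_{b}|\overline{Z}F|^{2}$.

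The main obstacle is this last step: matching the coefficients exactly. The first-order corrections coming from Lemma \ref{Kohn} and from $\nabla_{X_{\alpha}}\overline{Z}$ are not individually intrinsic, so one must integrate by parts along $\Sigma$ --- using $\operatorname{div}T=0$ from the second lemma together with the adjoint characterization of $\square_{b}$ --- in order to reorganize them into the symmetric Hermitian combination $\Pi(\overline{X_{\alpha}},\overline{X_{\beta}})f_{\alpha}\overline{f}_{\beta}$ and the genuinely intrinsic boundary term $\tfrac{1}{\sqrt{2}}H_{b}|\overline{Z}F|^{2}$. Carefully tracking the factors of $\sqrt{2}$ arising from $Z=(\nu-\sqrt{-1}T)/\sqrt{2}$, and the conjugation turning $W^{0}$ into $\operatorname{Re}W^{0}$, is the delicate bookkeeping that delivers the precise form of the identity.
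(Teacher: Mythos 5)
Your interior step is correct and complete: the $(1,0)$-field $W^{j}=(\square F)\overline{F_{j}}-F_{i,\overline{j}}\overline{F_{i}}$ does have divergence $\left\vert \square F\right\vert ^{2}-\left\vert D^{1,1}F\right\vert ^{2}$, with the two Ricci contractions cancelling as you say, and the passage to $\int_{\Sigma}\operatorname{Re}W^{0}$ is right. This is the same basic strategy as the paper, but with the conjugate-type potential: the paper works instead with $\left(  F_{i\overline{j}}\overline{F}_{j}\right)  _{\overline{i}}-\left(  \square F\,\overline{F}_{j}\right)  _{\overline{j}}$, where $\overline{F}_{j}=X_{j}\overline{F}$, so that the only commutation needed is $F_{i,\overline{j}\,\overline{i}}=F_{i,\overline{i}\,\overline{j}}$ --- automatically curvature-free on a Kahler manifold --- and no cancellation argument is required. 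That is a cosmetic difference; the substantive one shows up at the boundary.

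The gap is in the boundary expansion, which you only sketch and where your choice of $W$ makes the stated term-matching fail. In $W^{0}=F_{\alpha,\overline{\alpha}}\overline{F_{0}}-F_{\alpha,\overline{0}}\overline{F_{\alpha}}$ one has $\overline{F_{0}}=\overline{ZF}=\overline{Z}\,\overline{F}$, \emph{not} $Z\overline{F}$. Hence the normal term produced by Lemma \ref{Kohn} is $\frac{H_{b}}{\sqrt{2}}\left(  \overline{Z}F\right)  \left(  \overline{Z}\,\overline{F}\right)  $, which for complex $F$ is not $\frac{H_{b}}{\sqrt{2}}\left\vert \overline{Z}F\right\vert ^{2}$; with the paper's potential the same step gives $\frac{H_{b}}{\sqrt{2}}\left(  \overline{Z}F\right)  \left(  Z\overline{F}\right)  =\frac{H_{b}}{\sqrt{2}}\left\vert \overline{Z}F\right\vert ^{2}$ directly. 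Two further points in your sketch are off: the $\overline{Z}$-component of $\nabla_{X_{\alpha}}\overline{Z}$ is $-\sqrt{-1}\Pi\left(  X_{\alpha},T\right)  \overline{Z}$, so it feeds the $\Pi\left(  T,X_{\alpha}\right)  f_{\overline{\alpha}}$ bracket rather than $H_{b}\left\vert \overline{Z}F\right\vert ^{2}$; and the expansion of $D^{2}F\left(  X_{\alpha},\overline{X_{\alpha}}\right)  $ is exactly $\square_{b}f-\sqrt{-1}\Pi\left(  T,X_{\alpha}\right)  f_{\overline{\alpha}}+\frac{H_{b}}{\sqrt{2}}\overline{Z}F$ with no residual $\left\langle \nabla_{X_{\alpha}}^{\Sigma}\overline{X_{\alpha}},\overline{X_{\beta}}\right\rangle f_{\beta}$ or $Tf$ terms, since $\nabla_{X_{\alpha}}\overline{X_{\alpha}}$ is of type $(0,1)$. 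Because your bulk integral agrees with the paper's, $\int_{\Sigma}\operatorname{Re}W^{0}$ must ultimately equal the stated right-hand side, but only after additional integrations by parts on $\Sigma$ converting $\left(  \overline{Z}F\right)  \left(  \overline{Z}\,\overline{F}\right)  $ and $\left(  X_{\alpha}\overline{Z}F\right)  \overline{F_{\alpha}}$ into the stated combinations --- work your proposal neither performs nor correctly anticipates. The clean repair is to use the $(0,1)$-type potential, whose boundary density $D^{2}F\left(  Z,\overline{X_{\alpha}}\right)  \overline{f}_{\alpha}-\left(  \square F-D^{2}F\left(  Z,\overline{Z}\right)  \right)  Z\overline{F}$ lands on the three stated terms after a single integration by parts on $\Sigma$.
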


\begin{proof}
\bigskip Working with a local unitary frame, we have%
\begin{align*}
\left\vert D^{1,1}F\right\vert ^{2}-\left\vert \square F\right\vert ^{2}  &
=F_{i\overline{j}}\overline{F}_{\overline{i}j}-\square F\overline
{F}_{j\overline{j}}\\
&  =\left(  F_{i\overline{j}}\overline{F}_{j}\right)  _{\overline{i}%
}-F_{i\overline{j},\overline{i}}\overline{F}_{j}-\square F\overline
{F}_{j\overline{j}}\\
&  =\left(  F_{i\overline{j}}\overline{F}_{j}\right)  _{\overline{i}}-\left(
\square F\right)  _{\overline{j}}\overline{F}_{j}-\square F\overline
{F}_{j\overline{j}}\\
&  =\left(  F_{i\overline{j}}\overline{F}_{j}\right)  _{\overline{i}}-\left(
\square F\overline{F}_{j}\right)  _{\overline{j}}.
\end{align*}
Integrating by parts we obtain%
\begin{align*}
\sqrt{2}\int_{\Omega}\left\vert D^{1,1}F\right\vert ^{2}-\left\vert \square
F\right\vert ^{2}  &  =\int_{\Sigma}D^{2}F\left(  Z,\overline{X_{j}}\right)
\overline{F}_{j}-\square F\left(  Z\overline{F}\right) \\
&  =\int_{\Sigma}D^{2}F\left(  Z,\overline{X_{\alpha}}\right)  \overline
{f}_{\alpha}-\left(  \square F-D^{2}F\left(  Z,\overline{Z}\right)  \right)
Z\overline{F}.
\end{align*}
We now analyze the boundary terms carefully. We compute on $\Sigma$ using
Lemma \ref{Kohn}%
\begin{align*}
\square F-D^{2}F\left(  Z,\overline{Z}\right)   &  =D^{2}F\left(  X_{\alpha
},\overline{X_{\alpha}}\right) \\
&  =X_{\alpha}\overline{X_{\alpha}}F-\nabla_{X_{\alpha}}\overline{X_{\alpha}%
}F\\
&  =X_{\alpha}\overline{X_{\alpha}}f-\left\langle \nabla_{X_{\alpha}}%
\overline{X_{\alpha}},X_{\beta}\right\rangle \overline{X_{\beta}%
}f-\left\langle \nabla_{X_{\alpha}}\overline{X_{\alpha}},Z\right\rangle
\overline{Z}F\\
&  =\square_{b}f-\sqrt{-1}\Pi\left(  T,X_{\alpha}\right)  \overline{X_{\alpha
}}f+\left\langle \nabla_{X_{\alpha}}Z,\overline{X_{\alpha}}\right\rangle
\overline{Z}F\\
&  =\square_{b}f-\sqrt{-1}\Pi\left(  T,X_{\alpha}\right)  \overline{X_{\alpha
}}f+\sqrt{2}\Pi\left(  X_{\alpha},\overline{X_{\alpha}}\right)  \overline
{Z}F\\
&  =\square_{b}f-\sqrt{-1}\Pi\left(  T,X_{\alpha}\right)  \overline{X_{\alpha
}}f+\frac{H_{b}}{\sqrt{2}}\overline{Z}F
\end{align*}
We compute on $\Sigma$
\begin{align*}
D^{2}F\left(  Z,\overline{X_{\alpha}}\right)   &  =\overline{X_{\alpha}%
}ZF-\nabla_{\overline{X_{\alpha}}}ZF\\
&  =\overline{X_{\alpha}}ZF-\left\langle \nabla_{\overline{X_{\alpha}}%
}Z,\overline{Z}\right\rangle ZF-\left\langle \nabla_{\overline{X_{\alpha}}%
}Z,\overline{X_{\beta}}\right\rangle X_{\beta}f\\
&  =\overline{X_{\alpha}}ZF-\sqrt{-1}\Pi\left(  T,\overline{X_{\alpha}%
}\right)  ZF-\sqrt{2}\Pi\left(  \overline{X_{\alpha}},\overline{X_{\beta}%
}\right)  X_{\beta}f.
\end{align*}
Therefore%
\begin{equation*}
\begin{split}
& \  \sqrt{2}\int_{\Omega}\left\vert D^{1,1}F\right\vert ^{2}-\left\vert \square
F\right\vert ^{2}\\
= &  \ \int_{\Sigma}\left(  \overline{X_{\alpha}}ZF-\sqrt{-1}\Pi\left(
T,\overline{X_{\alpha}}\right)  ZF-\sqrt{2}\Pi\left(  \overline{X_{\alpha}%
},\overline{X_{\beta}}\right)  f_{\beta}\right)  \overline{f}_{\alpha}\\
& \  +\int_{\Sigma}\left(  -\square_{b}f+\sqrt{-1}\Pi\left(  T,X_{\alpha
}\right)  \overline{X_{\alpha}}f-\frac{H_{b}}{\sqrt{2}}\right)  Z\overline
{F}\\
=&  \ \int_{\Sigma}\left(  -Z\overline{F}\square_{b}f-\overline{\square_{b}%
f}ZF+\sqrt{-1}\Pi\left(  T,X_{\alpha}\right)  f_{\overline{\alpha}}%
Z\overline{F}-\sqrt{-1}\Pi\left(  T,\overline{X_{\alpha}}\right)  \overline
{f}_{\alpha}ZF\right) \\
& -\sqrt{2}\int_{\Sigma}\Pi\left(  \overline{X_{\alpha}},\overline{X_{\beta}%
}\right)  \overline{f}_{\alpha}f_{\beta} -\frac{1}{\sqrt{2}}\int_{\Sigma}H_{b}\left\vert \overline{Z}F\right\vert
^{2},
\end{split}
\end{equation*}
where in the process we did integration by part on $\Sigma$. Reorganizing the
terms yields (\ref{id}).
\end{proof}

\section{\bigskip Holomorphic extension of CR functions}

Let $M$ be a complex manifold of complex dimension $m+1\geq2$ and
$\Sigma\subset M$ a real hypersurface. A function $f$ on $\Sigma$ is called CR
if it satisfies the tangential Cauchy-Riemann equations, i.e. $\overline
{X}f=0$ for all $X\in T^{1,0}\Sigma$. Obviously, a holomorphic function on a
neighborhood of $\Sigma$ restricts to a CR function on $\Sigma$. Conversely,
it is an interesting question if all CR functions arise this way. If $\Sigma$
encloses a domain $\Omega$, one can also ask the global question: does a CR
function on $\Sigma$ extends to a holomorphic function on $\Omega$?

On this problem, we have the following classic result (Theorem 2.3.2' in
Hormander \cite{H})

\begin{theorem}
Let $\Omega$ be a smooth, bounded open set in
$\mathbb{C}^{m+1},m\geq1$, s.t. $\mathbb{C}^{m+1}\backslash\overline{\Omega}%
$ is connected. If $u\in C^{\infty}\left(  \partial\Omega\right)
$ is a CR function, one can find a holomorphic function $U\in
C^{\infty}\left(  \overline{\Omega}\right)  $ s.t. $U=u\,$%
 on $\partial\Omega$\textit{.}
\end{theorem}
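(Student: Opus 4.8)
The plan is to reduce the problem to solving $\overline\partial$ with compactly supported data on $\mathbb{C}^{m+1}$, which is the classical Bochner--Hartogs mechanism. First I would extend $u$ to a function $\tilde u\in C_0^\infty(\mathbb{C}^{m+1})$; the goal is then to correct $\tilde u$ by a function vanishing on $\partial\Omega$ that kills $\overline\partial\tilde u$ inside $\Omega$. To set this up, fix a smooth defining function $\rho$ with $\Omega=\{\rho<0\}$ and $d\rho\neq 0$ on $\partial\Omega$, and consider the $(0,1)$-current $g:=\chi_\Omega\,\overline\partial\tilde u$, where $\chi_\Omega$ is the characteristic function of $\Omega$. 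By construction $g=\overline\partial\tilde u$ on $\Omega$ and $g=0$ outside $\overline\Omega$, and $g$ has compact support with bounded measurable coefficients.

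The first key step is to verify that $g$ is $\overline\partial$-closed as a current, and this is exactly where the CR hypothesis enters. Computing distributionally, $\overline\partial g=\overline\partial\chi_\Omega\wedge\overline\partial\tilde u$, and $\overline\partial\chi_\Omega$ is a current carried by $\partial\Omega$ proportional to $\overline\partial\rho$. Since $T^{0,1}\partial\Omega=\ker(\overline\partial\rho)$ inside $T^{0,1}\mathbb{C}^{m+1}$, the CR condition $\overline\partial_b u=0$ forces $\overline\partial\tilde u$ to lie in the line spanned by $\overline\partial\rho$ along $\partial\Omega$; hence $\overline\partial\rho\wedge\overline\partial\tilde u=0$ there and the singular contribution to $\overline\partial g$ vanishes, giving $\overline\partial g=0$. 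Thus $g$ is a compactly supported, $\overline\partial$-closed $(0,1)$-form.

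The central step is to solve $\overline\partial v=g$ with $v$ compactly supported. I would first produce a particular solution by a Cauchy transform in one variable, say $z_0$, and use $\overline\partial g=0$ to check that every component of $\overline\partial v$ matches the corresponding component of $g$. Here the hypothesis $m+1\geq 2$ is essential: there are extra variables $(z_1,\dots,z_m)$, and for $|(z_1,\dots,z_m)|$ large the data $g$ vanish, so the Cauchy transform $v$ vanishes there as well; since $v$ is holomorphic on the complement of $\mathrm{supp}\,g$, the identity theorem propagates this and forces $v$ to have compact support. This is the Ehrenpreis argument, and it genuinely fails in dimension one. Now on $\mathbb{C}^{m+1}\setminus\overline\Omega$ we have $g=0$, so $v$ is holomorphic there; as $v$ has compact support it vanishes near infinity, and the hypothesis that $\mathbb{C}^{m+1}\setminus\overline\Omega$ is connected lets the identity theorem give $v\equiv 0$ on the whole exterior, so in particular $v|_{\partial\Omega}=0$. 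Setting $U:=\tilde u-v|_{\overline\Omega}$ then yields $\overline\partial U=\overline\partial\tilde u-g=0$ in $\Omega$ and $U|_{\partial\Omega}=u-0=u$, as required.

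The main obstacle I anticipate is the $C^\infty$ regularity of $U$ up to $\partial\Omega$. The datum $g$ is only piecewise smooth, with a jump across $\partial\Omega$, so although $U$ is automatically smooth in the interior of $\Omega$ (being holomorphic) and $v$ is smooth outside $\overline\Omega$ (being holomorphic, indeed zero), smoothness up to the boundary \emph{from inside} must be argued separately. I would handle this through the regularity theory for $\overline\partial$ with data that are smooth up to the boundary from each side, equivalently via the jump formulas for the Bochner--Martinelli--Koppelman representation, which show that the Cauchy-type transform of such a $g$ extends smoothly up to $\partial\Omega$ from within $\Omega$, completing the argument.
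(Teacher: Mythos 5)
The paper does not prove this statement at all: it is quoted verbatim as Theorem 2.3.2$'$ of H\"ormander's book and used as a point of comparison, so there is no in-paper proof to measure you against. Your argument is the classical Bochner--Hartogs/Ehrenpreis proof, which is essentially the one in H\"ormander: extend $u$, cut off $\overline{\partial}\tilde u$ by $\chi_{\Omega}$, check $\overline{\partial}$-closedness of the resulting compactly supported $(0,1)$-datum using the tangential CR equations (your observation that $\overline{\partial}\tilde u$ is proportional to $\overline{\partial}\rho$ along $\partial\Omega$, so the singular term $\overline{\partial}\chi_{\Omega}\wedge\overline{\partial}\tilde u$ vanishes, is correct), solve $\overline{\partial}v=g$ by the one-variable Cauchy transform, and use $m+1\geq 2$ together with connectedness of $\mathbb{C}^{m+1}\setminus\overline{\Omega}$ to force $v\equiv 0$ on the exterior. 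All of that is sound, and you correctly locate where each hypothesis enters.

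The one genuine soft spot is the $C^{\infty}(\overline{\Omega})$ regularity of $U$, which you flag but do not resolve. With an arbitrary extension $\tilde u$, your datum $g=\chi_{\Omega}\overline{\partial}\tilde u$ has a jump across $\partial\Omega$, and the partial Cauchy transform in $z_{0}$ of such data is not obviously smooth up to $\partial\Omega$ from inside: the slices $\Omega\cap\{z'=\mathrm{const}\}$ degenerate at points where $\partial\Omega$ is tangent to the $z_{0}$-line, so the Plemelj-type one-sided smoothness you invoke is not a routine matter there, and falling back on the full Bochner--Martinelli--Koppelman jump theory imports exactly the kind of boundary machinery this elementary route is meant to avoid. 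The standard repair, and the one H\"ormander uses, is to put the work into the extension step instead: the tangential CR equations (iterated, using $\overline{\partial}^{2}=0$ for formal consistency) let you choose $\tilde u$ so that $\overline{\partial}\tilde u$ vanishes to infinite order on $\partial\Omega$. Then $g$ is a globally $C^{\infty}$, compactly supported, $\overline{\partial}$-closed form, $v$ is $C^{\infty}$ on all of $\mathbb{C}^{m+1}$, and $U=\tilde u-v$ is smooth on $\overline{\Omega}$ with no jump analysis needed. With that modification your proof is complete.
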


In a general complex manifold, Kohn and Rossi \cite{KR} proved the following

\begin{theorem}
Let $\Omega$\textit{ be a precompact domain with smooth boundary in a complex
manifold }$M^{m+1}$. Suppose the boundary is connected and the Levi form on
the boundary has one positive eigenvalue everywhere, then every $CR$ function
on $\partial\Omega$ has a holomorphic extension to $\Omega$.
\end{theorem}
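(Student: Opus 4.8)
The plan is to follow the method of Kohn and Rossi, reducing holomorphic extension to a solvability statement for $\overline{\partial}$ with a vanishing boundary condition, whose analytic core is an $L^{2}$ estimate for the $\overline{\partial}$-Neumann problem governed precisely by the one-positive-eigenvalue hypothesis. It should be stressed that the integral identity of Section 2 is \emph{not} available for this theorem: it yields extension only under the far stronger assumption $H_{b}>0$, that is, positivity of the \emph{trace} of the Levi form, whereas the present hypothesis of a single positive eigenvalue is strictly weaker (a positive trace forces a positive eigenvalue, but not conversely). Hence a genuinely different and heavier machinery is required, and the elementary Theorem A of this paper settles only the stronger-hypothesis case.

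First I would set up the reduction. Choose any smooth extension $\tilde f\in C^{\infty}(\overline{\Omega})$ of $f$ and put $\alpha=\overline{\partial}\tilde f$, a $\overline{\partial}$-closed $(0,1)$-form smooth up to the boundary. Since $f$ is CR, for $\overline{X}\in T^{0,1}\Sigma$ one has $\alpha(\overline{X})=\overline{X}\tilde f=\overline{X}f=0$, so the pullback of $\alpha$ to $T^{0,1}\Sigma$ vanishes and $\alpha$ represents a class in the relative Dolbeault cohomology $H^{0,1}(\overline{\Omega},\Sigma)$ of forms that die on the boundary. If that class is zero, i.e. if one can write $\alpha=\overline{\partial}u$ with $u\in C^{\infty}(\overline{\Omega})$ and $u|_{\Sigma}=0$, then $F:=\tilde f-u$ is holomorphic on $\Omega$, smooth up to $\overline{\Omega}$, and restricts to $f$. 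The whole theorem is thereby reduced to the vanishing of the class of $\alpha$ in $H^{0,1}(\overline{\Omega},\Sigma)$.

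The heart of the matter is this cohomology vanishing, and here I would pass to the dual problem. By Serre duality for the domain with boundary, $H^{0,1}(\overline{\Omega},\Sigma)$ pairs with the absolute group $H^{n,n-1}(\Omega)$, where $n=m+1$; equivalently, solvability of $\overline{\partial}u=\alpha$ with $u|_{\Sigma}=0$ is obstructed only by the harmonic space of the $\overline{\partial}$-Neumann problem in the complementary degree $(n,n-1)$. The assumption that the Levi form has one positive eigenvalue at every boundary point is exactly H\"ormander's condition $Z(n-1)$ for forms of degree $n-1$ (since $n-(n-1)=1$ positive eigenvalue suffices), under which the basic estimate $\|\phi\|^{2}\le C(\|\overline{\partial}\phi\|^{2}+\|\overline{\partial}^{\ast}\phi\|^{2})$ holds for $(n,n-1)$-forms $\phi$ in the domain of the Neumann Laplacian. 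This estimate forces the harmonic space to vanish, giving the needed vanishing, and the subelliptic regularity attached to it produces a solution $u$ smooth up to $\overline{\Omega}$. Connectedness of $\Sigma$ enters to guarantee that the eigenvalue condition is imposed with a consistent orientation, ruling out a pseudoconcave inner boundary (as in a spherical shell) on which the sign would flip, and thus that the genuine vanishing, not merely finite-dimensionality, is available.

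The main obstacle is plainly the analytic input: establishing the basic $L^{2}$ estimate together with boundary regularity for the $\overline{\partial}$-Neumann problem on $(n,n-1)$-forms under the single-eigenvalue Levi condition. This needs the full apparatus of subelliptic estimates near the boundary, through the Morrey--Kohn--H\"ormander inequality and a careful handling of the free boundary condition in the relevant degree. It is this step, rather than the elementary reduction above, that makes the theorem deep, and it is exactly the part beyond the reach of the integral identity of Section 2, which is why the present paper recovers only the weaker statement under $H_{b}>0$.
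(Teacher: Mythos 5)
The paper itself offers no proof of this statement: it is quoted verbatim as the classical theorem of Kohn--Rossi \cite{KR}, with only the remark that their method rests on the solution and boundary regularity of the $\overline{\partial}$-Neumann problem; what the paper proves by its own elementary method is the strictly weaker Theorem \ref{ext}, under $H_{b}>0$. So your proposal must stand on its own. Its framing is correct: the integral identity of Section 2 cannot reach the one-positive-eigenvalue hypothesis, the reduction (extend $f$ smoothly, set $\alpha=\overline{\partial}\tilde{f}$, and seek $u$ with $\overline{\partial}u=\alpha$ and $u|_{\Sigma}=0$, so that $F=\tilde{f}-u$ works) is the standard first step, and the identification of ``one positive Levi eigenvalue'' with H\"ormander's condition $Z(n-1)$, $n=m+1$, is also right.

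The genuine gap is the sentence ``This estimate forces the harmonic space to vanish, giving the needed vanishing.'' Condition $Z(q)$ does \emph{not} yield the estimate $\|\phi\|^{2}\le C\left(\|\overline{\partial}\phi\|^{2}+\|\overline{\partial}^{\ast}\phi\|^{2}\right)$ you invoke; it yields the subelliptic estimate $\|\phi\|_{1/2}^{2}\le C\left(\|\overline{\partial}\phi\|^{2}+\|\overline{\partial}^{\ast}\phi\|^{2}+\|\phi\|^{2}\right)$, whose consequences are finite dimensionality of the harmonic space $\mathcal{H}^{n,n-1}$, closed range, and boundary regularity --- not vanishing. The estimate as you wrote it is in fact \emph{equivalent} to the vanishing you want to conclude, so the argument is circular at its crucial point. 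Worse, the vanishing is false in general under your hypotheses minus connectedness: take $\Omega\subset\mathbb{CP}^{3}$ to be the complement of two disjoint closed tubes around two disjoint projective lines. Every boundary point then has a positive Levi eigenvalue (the Levi form has signature $(1,1)$ there, by the eigenvalue computation for tubes in Section 5 of the paper), yet the locally constant CR function equal to $0$ on one boundary component and $1$ on the other admits no holomorphic extension, since any $F\in\mathcal{O}(\Omega)$ continues across the pseudoconcave directions of the boundary into the tubes and is therefore constant; equivalently its class in $H^{0,1}(\overline{\Omega},\Sigma)\cong\left(\mathcal{H}^{n,n-1}\right)^{\ast}$ is nonzero. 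Since the basic estimate and everything it implies are insensitive to whether $\Sigma$ is connected, no argument of the shape you describe can produce the vanishing. After finite dimensionality, the remaining and essential task is to show that the obstruction pairing $\int_{\Sigma}f\,\psi=\int_{\Omega}\alpha\wedge\psi$ vanishes for every Neumann-harmonic $(n,n-1)$-form $\psi$; this is where the real work of Kohn--Rossi lies and where connectedness of $\Sigma$ must enter substantively. Your stated use of connectedness (``consistent orientation of the eigenvalue condition'') is not a mathematical step, and the example above shows that any proof in which connectedness never genuinely intervenes is necessarily incomplete.
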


Their approach is via the solution of the $\overline{\partial}$-Neumann
problem, and particularly the regularity of solutions at the boundary.

Using the formula (\ref{id}), we present an elementary new approach to this
problem. The basic idea is very simple: given a CR function $f$ on $\Sigma$
one should try to prove its harmonic extension on $\Omega$ is holomorphic. Our
approach works in any Kahler manifold with a mild pointwise condition on
$\Sigma$. The result we can prove is weaker than the Kohn-Rossi theorem, but
the method has the merit of being elementary. In particular we avoid the
analytically sophisticated $\overline{\partial}$-Neumann problem.

Let $M$ be a Kahler manifold of complex dimension $m+1\geq2$. Let
$\Omega\subset M$ be a (connected) precompact domain with smooth boundary
$\Sigma$. For simplicity we assume everything is smooth. But the optimal
regularity required for the method to work should be obvious.

\begin{theorem}\label{ext}
Suppose that $\Sigma$ satisfies the following positivity condition%
\begin{equation}
H_{b}>0. \label{pos}%
\end{equation}
The for any $f\in C^{\infty}\left(  \Sigma\right)  $ which is a CR function,
there exists $F\in C^{\infty}\left(  \overline{\Omega}\right)  \cap
\mathcal{O}\left(  \Omega\right)  $ s.t. $F|_{\Sigma}=f$.
\end{theorem}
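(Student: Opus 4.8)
The plan is to follow the stated idea literally: given a CR function $f$ on $\Sigma$, solve the Dirichlet problem for the real Laplacian to obtain the harmonic extension $F \in C^{\infty}(\overline{\Omega})$ with $F|_{\Sigma} = f$, and then prove that $F$ is in fact holomorphic on $\Omega$. Since $\square F = \frac{1}{2}\Delta F$, harmonicity gives $\square F = 0$, so the left-hand side of the integral formula (\ref{id}) reads $\sqrt{2}\int_{\Omega}(-|D^{1,1}F|^{2})$. The goal is to force $D^{1,1}F \equiv 0$, i.e. $F_{i\overline{j}} = 0$ for all $i,j$, from which $\overline{\partial} F$ is holomorphic; combined with the boundary condition and an appropriate uniqueness statement this will give $\overline{\partial} F = 0$, so $F \in \mathcal{O}(\Omega)$.

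The heart of the argument is to exploit the CR hypothesis to kill all the boundary terms on the right-hand side of (\ref{id}). Since $f$ is a CR function, $\overline{X}f = 0$ for every $X \in T^{1,0}\Sigma$, which in our unitary frame means $f_{\overline{\alpha}} = \overline{X_{\alpha}}f = 0$ for $1 \le \alpha \le m$. First I would observe that this immediately annihilates the term $\sqrt{-1}\Pi(T,X_{\alpha})f_{\overline{\alpha}}$ appearing inside both bracketed factors. Next, the Kohn Laplacian term $\square_b f$ vanishes: from Lemma \ref{Kohn}, $\square_b f = X_{\alpha} f_{\overline{\alpha}} - \langle \nabla^{\Sigma}_{X_{\alpha}}\overline{X_{\alpha}}, X_{\beta}\rangle f_{\overline{\beta}} + \sqrt{-1}\Pi(T,X_{\alpha})f_{\overline{\alpha}}$, and every summand carries a factor $f_{\overline{\alpha}}$ or $f_{\overline{\beta}}$, hence $\square_b f = 0$. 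These two facts together make the entire first boundary integral in (\ref{id}) vanish. The term $\sqrt{2}\int_{\Sigma}\Pi(\overline{X_{\alpha}},\overline{X_{\beta}})f_{\alpha}\overline{f}_{\beta}$ does not vanish in general (it involves the holomorphic derivatives $f_{\alpha}$, not $f_{\overline{\alpha}}$), so I must argue this term is nonnegative, or more precisely control its sign. I would rewrite it and combine it with the remaining $\frac{1}{\sqrt{2}}\int_{\Sigma}H_b|\overline{Z}F|^{2}$ term.

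The main obstacle is precisely the interplay of these last two boundary terms. After dropping the vanishing pieces, the identity reduces to
\[
-\sqrt{2}\int_{\Omega}|D^{1,1}F|^{2} = \sqrt{2}\int_{\Sigma}\Pi(\overline{X_{\alpha}},\overline{X_{\beta}})f_{\alpha}\overline{f}_{\beta} + \frac{1}{\sqrt{2}}\int_{\Sigma}H_b|\overline{Z}F|^{2}.
\]
The left side is manifestly nonpositive. If I can show the right side is nonnegative, then both sides are zero, forcing $D^{1,1}F \equiv 0$ in $\Omega$ and $\overline{Z}F \equiv 0$ on $\Sigma$ (using $H_b > 0$), together with $\Pi(\overline{X_{\alpha}},\overline{X_{\beta}})f_{\alpha}\overline{f}_{\beta} = 0$ on $\Sigma$. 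The $H_b$ term is nonnegative by the positivity hypothesis (\ref{pos}), so the crux is the sign of the Hessian-type boundary term. The quantity $\Pi(\overline{X_{\alpha}},\overline{X_{\beta}})$ is the $(2,0)$-part of the second fundamental form (not the Levi form $\Pi(X_{\alpha},\overline{X_{\beta}})$), so its positivity is not automatic. I expect the resolution is to integrate by parts on $\Sigma$ once more, using that $f$ is CR to convert $f_{\alpha}$ derivatives and trade the $(2,0)$-form against a divergence and a curvature/second-fundamental-form term; the identity $\mathrm{div}\,T = 0$ from the second lemma and the CR condition should combine to show this term is itself controlled by $\int_{\Sigma}H_b|\overline{Z}F|^2$ or is nonnegative outright. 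Once nonnegativity of the right side is secured, I conclude $D^{1,1}F = 0$, so $\eta := \overline{\partial}F$ satisfies $\overline{\partial}\eta = 0$ and has components $F_{\overline{j}}$ with $\overline{\partial}$-closedness; a standard argument (or direct verification that the $(0,1)$-form $\overline{\partial}F$ is both $\overline{\partial}$-closed and, via $D^{1,1}F=0$, parallel in the antiholomorphic directions) together with $\overline{Z}F = 0$ on $\Sigma$ yields $\overline{\partial}F \equiv 0$ on $\Omega$, i.e. $F$ is holomorphic. The final step is to confirm $F \in C^{\infty}(\overline{\Omega})$, which is automatic since $F$ is the harmonic extension of a smooth boundary datum and elliptic regularity gives smoothness up to the boundary.
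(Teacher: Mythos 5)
Your overall strategy is the paper's: harmonically extend $f$, feed $F$ into the identity (\ref{id}), use the CR condition to kill the boundary terms, and deduce $D^{1,1}F=0$ in $\Omega$ and $\overline{Z}F=0$ on $\Sigma$. The steps you actually carry out (vanishing of $\square F$, of $f_{\overline{\alpha}}$, and hence of $\square_{b}f$ and of the $\Pi\left(T,X_{\alpha}\right)f_{\overline{\alpha}}$ terms) are correct. But there is a genuine gap exactly where you flag ``the main obstacle'': you assert that $\sqrt{2}\int_{\Sigma}\Pi\left(\overline{X_{\alpha}},\overline{X_{\beta}}\right)f_{\alpha}\overline{f}_{\beta}$ does not vanish in general and must be shown nonnegative, and you only gesture at a hoped-for further integration by parts using $\mathrm{div}\,T=0$. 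In fact this term vanishes identically for a CR function, for the same reason as the others: in the paper's conventions $\overline{f}_{\beta}$ denotes $X_{\beta}\overline{f}=\overline{\overline{X_{\beta}}f}=\overline{f_{\overline{\beta}}}$, which is zero precisely because $f$ is CR. (The surviving factor $f_{\alpha}=X_{\alpha}f$ is multiplied by $\overline{f}_{\beta}=0$.) You appear to have read $\overline{f}_{\beta}$ as $\overline{X_{\beta}}\,\overline{f}$, the conjugate of $f_{\beta}$, which indeed need not vanish. The distinction matters: $\Pi\left(\overline{X_{\alpha}},\overline{X_{\beta}}\right)$ is the $(2,0)$-part of the second fundamental form and has no sign, so the nonnegativity you hope to extract is not available, and as written your argument does not close. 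Once the term is recognized as zero, the identity reduces to $\int_{\Omega}\left\vert D^{1,1}F\right\vert^{2}=-\frac{1}{2}\int_{\Sigma}H_{b}\left\vert\overline{Z}F\right\vert^{2}$ and the sign argument proceeds exactly as you describe.

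A secondary weakness is the final step. Knowing $D^{1,1}F=0$ in $\Omega$ and $\overline{Z}F=0$ on $\Sigma$ does not by itself propagate the vanishing of $\overline{\partial}F$ into the interior: $F_{i\overline{j}}=0$ controls the derivatives of the components $F_{\overline{j}}$ only in the $(1,0)$ directions and says nothing about $F_{\overline{i}\,\overline{j}}$, so ``parallel in the antiholomorphic directions'' is not what you have. The clean route (the paper's) is one more integration by parts using only harmonicity: $\int_{\Omega}\left\vert\overline{\partial}F\right\vert^{2}=\int_{\Omega}\left(F_{\overline{j}}\overline{F}\right)_{j}=\int_{\Sigma}\left(\overline{Z}F\right)\overline{f}=0$, where the first equality uses $F_{\overline{j},j}=\square F=0$; this gives $\overline{\partial}F\equiv0$ directly. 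Your last sentence on boundary regularity of the harmonic extension is fine.
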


\begin{remark}
Condition (\ref{pos}) is much weaker than strict pseudoconvexity.
\end{remark}

Let $F\in C^{\infty}\left(  \overline{\Omega}\right)  $ be the harmonic
extension of $f$. By the integral identity%
\[
\int_{\Omega}\left\vert D^{1,1}F\right\vert ^{2}=-\frac{1}{4}\int_{\Sigma
}H_{b}\left\vert \overline{Z}F\right\vert ^{2}.
\]
Under the boundary condition we must have $D^{1,1}F=0$ on $\Omega$ and
$\overline{Z}F=0$ on $\Sigma$. Integrating by parts we have%
\begin{align*}
\int_{\Omega}\left\vert \overline{\partial}F\right\vert ^{2}  &  =\int
_{\Omega}F_{\overline{j}}\overline{F}_{j}\\
&  =\int_{\Omega}\left(  F_{\overline{j}}\overline{F}\right)  _{j}\\
&  =\int_{\Sigma}\left(  \overline{Z}F\right)  \overline{f}\\
&  =0.
\end{align*}
Therefore $F$ is holomorphic.

\begin{corollary}
\bigskip Under the same assumption, any holomorphic function on $M\backslash
\overline{\Omega}$ extends to a holomorphic function on $M$.
\end{corollary}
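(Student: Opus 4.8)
The plan is to reduce the statement to the extension result just proved, Theorem~\ref{ext}, by slicing the domain of the given function with a hypersurface, extending the restriction inward, and gluing. Write $g\in\mathcal{O}(M\setminus\overline{\Omega})$ for the given holomorphic function. The naive idea would be to restrict $g$ to $\Sigma$, note that $g|_{\Sigma}$ is a CR function, extend it to $F\in C^{\infty}(\overline{\Omega})\cap\mathcal{O}(\Omega)$ by Theorem~\ref{ext}, and set $G=g$ outside $\Omega$ and $G=F$ inside. The difficulty is that $g$ is only holomorphic on the \emph{open} set $M\setminus\overline{\Omega}$ and need not extend even continuously to $\Sigma$, so $g|_{\Sigma}$ may be meaningless.

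To get around this I would first enlarge the domain slightly. Since $\Sigma$ is compact and $H_{b}$ is continuous and strictly positive on it, $H_{b}\geq c>0$ on $\Sigma$. Pushing $\Sigma$ outward along the normal $\nu$ by a small distance $t>0$ produces a smooth family of hypersurfaces $\Sigma_{t}$ bounding connected precompact domains $\Omega_{t}\supset\overline{\Omega}$; the second fundamental form, and hence $H_{b}$, varies continuously in $t$, so for $t$ small enough $H_{b}>0$ on $\Sigma_{t}$. Fix such a $\tilde{\Omega}:=\Omega_{t}$ with boundary $\tilde{\Sigma}$. Because $\tilde{\Sigma}\subset M\setminus\overline{\Omega}$ lies in the region where $g$ is holomorphic, $g$ is holomorphic in a neighborhood of $\tilde{\Sigma}$, so $f:=g|_{\tilde{\Sigma}}$ is a genuine smooth CR function. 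Applying Theorem~\ref{ext} to $\tilde{\Omega}$ yields $F\in C^{\infty}(\overline{\tilde{\Omega}})\cap\mathcal{O}(\tilde{\Omega})$ with $F|_{\tilde{\Sigma}}=f$.

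Next I would glue. Define $G$ on $M=\tilde{\Omega}\cup(M\setminus\overline{\tilde{\Omega}})$ by $G=F$ on $\tilde{\Omega}$ and $G=g$ on $M\setminus\tilde{\Omega}$. Along the seam $\tilde{\Sigma}$ the two pieces both have boundary value $f$, so $G$ is continuous on $M$ and holomorphic on $M\setminus\tilde{\Sigma}$. A continuous function that is holomorphic off a smooth real hypersurface is holomorphic across it (by restricting to complex lines transverse to $\tilde{\Sigma}$ and applying the one variable Morera/removable singularity lemma, together with Osgood's lemma), so $G\in\mathcal{O}(M)$. Finally, $G=g$ on $M\setminus\tilde{\Omega}$ by construction, and since $G$ and $g$ are both holomorphic on $M\setminus\overline{\Omega}$ and agree on the open subset $M\setminus\tilde{\Omega}$, the identity theorem on the connected set $M\setminus\overline{\Omega}$ forces $G=g$ there. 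Thus $G$ is the desired holomorphic extension.

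The hard part is not any single computation but the two gluing inputs: verifying that the outward perturbation keeps $H_{b}$ positive (a continuity and compactness argument for the second fundamental form of the parallel hypersurfaces $\Sigma_{t}$), and invoking removability of the smooth real hypersurface $\tilde{\Sigma}$ for continuous holomorphic functions. One should also keep track of connectivity so that the identity theorem genuinely applies on $M\setminus\overline{\Omega}$.
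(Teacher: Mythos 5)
Your argument is correct, and it supplies exactly the proof the paper omits: the corollary is stated without justification, and the intended route is clearly the one you take (push $\Sigma$ outward to a nearby hypersurface $\tilde{\Sigma}$ on which $H_{b}$ is still positive, restrict $g$ there to get a CR function, extend inward by Theorem~\ref{ext}, and glue). The details you flag as delicate --- persistence of $H_{b}>0$ under the normal perturbation, holomorphy of the glued function across $\tilde{\Sigma}$, and the identity-theorem step on the collar (which works component by component even without assuming $M\setminus\overline{\Omega}$ connected, since every collar point is joined to the exterior of $\tilde{\Omega}$ by its normal geodesic) --- all go through as you indicate.
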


If we only assume that $H_{b}\geq0$, then the argument above shows that the
harmonic extension $F$ is pluriharmoinc: $F_{i\overline{j}}=0$. Is it possible
to prove that $F$ is holomorphic? Or equivalently, is Theorem \ref{ext} valid under the
condition $H_{b}\geq0$?

\section{Geometric inequalities}

As a first application of the integral formula (\ref{id}), we prove the following

\begin{proposition}
Let $\Omega$ be a connected precompact domain with smooth boundary $\Sigma$ in
a Kahler manifold. If $\Sigma$ satisfies $H_{b}>0$, then $\Sigma$ is connected.
\end{proposition}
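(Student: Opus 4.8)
The plan is to argue by contradiction, using the holomorphic extension result Theorem \ref{ext}, which applies precisely under the standing hypothesis $H_b>0$. Suppose $\Sigma$ were disconnected, and write $\Sigma=\Sigma_0\sqcup\Sigma_1$ as a disjoint union of two nonempty relatively open and closed pieces. Define $f:\Sigma\to\mathbb{R}$ to be the locally constant function equal to $0$ on $\Sigma_0$ and $1$ on $\Sigma_1$. Since $\overline{X}f=0$ for every $X\in T^{1,0}\Sigma$, this $f$ is trivially a CR function, so Theorem \ref{ext} produces $F\in C^{\infty}\left(\overline{\Omega}\right)\cap\mathcal{O}\left(\Omega\right)$ with $F|_{\Sigma}=f$.

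The next step is to show that this forces $F$ to be real-valued. Because $F$ is holomorphic we have $\square F=0$, and since $\square$ is half the real Laplacian, $F$ is harmonic on $\Omega$. The conjugate $\overline{F}$ is then also harmonic (the real Laplacian commutes with conjugation), and its boundary values are $\overline{f}=f$, the same as those of $F$, because $f$ is real. By uniqueness for the Dirichlet problem on the connected domain $\Omega$, the two harmonic extensions must coincide: $F=\overline{F}$. Hence $F$ is real-valued.

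Finally I would invoke the rigidity of real holomorphic functions. From $\overline{\partial}F=0$ together with $F=\overline{F}$ we get $\partial F=\partial\overline{F}=\overline{\overline{\partial}F}=0$, so $dF=0$ and $F$ is constant on the connected domain $\Omega$. But $F|_{\Sigma}=f$ takes both the values $0$ and $1$, a contradiction. Therefore $\Sigma$ must be connected.

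The argument is short once Theorem \ref{ext} is available; the only point requiring care is the passage from ``harmonic with real boundary data'' to ``real-valued,'' which rests on uniqueness of the harmonic (Dirichlet) extension on a connected domain, combined with the elementary fact that a function annihilated by both $\partial$ and $\overline{\partial}$ is locally constant. I expect no genuine obstacle here: the substantive content has already been absorbed into the extension theorem, and it is in the proof of that theorem that the hypothesis $H_b>0$ is actually used.
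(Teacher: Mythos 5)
Your proof is correct, and there is no circularity: Theorem \ref{ext} is proved before this proposition, its proof nowhere assumes $\Sigma$ connected, and a locally constant function is trivially CR, so the appeal to the extension theorem is legitimate. The route does differ from the paper's at the decisive step. The paper applies the integral identity (\ref{id}) directly to the real harmonic extension $u$ of the same indicator function: since $f$ is locally constant all boundary terms drop out except the one involving $H_{b}\left\vert \overline{Z}u\right\vert ^{2}$, yielding $\int_{\Omega}\left\vert D^{1,1}u\right\vert ^{2}=-\frac{1}{4}\int_{\Sigma}H_{b}\chi^{2}$ with $\chi=\frac{\partial u}{\partial\nu}$, and the contradiction comes from the Hopf lemma, which forces $\chi>0$ on one boundary component and $\chi<0$ on the other, so the right-hand side is strictly negative while the left-hand side is nonnegative. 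You instead leave the identity packaged inside Theorem \ref{ext} and obtain the contradiction from the rigidity of real holomorphic functions: $F=\overline{F}$ together with $\overline{\partial}F=0$ gives $dF=0$, so $F$ is constant on the connected $\Omega$, incompatible with boundary values $0$ and $1$. This lets you dispense with the strong maximum principle and the Hopf lemma entirely; what the paper's version buys in exchange is self-containedness (only the identity is used, not the extension theorem) and an explicit sign mechanism. Both arguments are sound, and the underlying analytic engine --- the integral identity applied to the harmonic extension --- is ultimately the same.
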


\begin{proof}
Suppose $\Sigma$ is not connected and let $\Sigma_{1}$ be a connected
component and $\Sigma_{2}=\Sigma\backslash\Sigma_{1}$. Let $f\in C^{\infty
}\left(  \Sigma\right)  $ be the function that is $1$ on $\Sigma_{1}$ and $0$
on $\Sigma_{2}$. Let $u\in C^{\infty}\left(  \overline{\Omega}\right)  $ be
the harmonic extension of $f$ and $\chi=\frac{\partial u}{\partial\nu}$. Note
that $u$ is real. By the maximum principle and Hopf Lemma,%
\[
0<u<1\text{ on }\Omega\text{; }\chi>0\text{ on }\Sigma_{1}\text{; }%
\chi<0\text{ on }\Sigma_{2}.
\]
By (\ref{id}) we have%
\[
\int_{\Omega}\left\vert D^{1,1}u\right\vert ^{2}=-\frac{1}{4}\int_{\Sigma
}H_{b}\chi^{2}<0,
\]
a contradiction.
\end{proof}

\begin{remark}
\bigskip It is interesting to compare our Proposition with the following
classic fact in Riemannian geometry: a compact connected Riemannian manifold
with mean convex boundary and nonnegative Ricci curvature has at most two
boundary components; moreover if \ $\partial M$ has two components, then $M$
is isometric to a cylinder $N\times\left[  0,a\right]  $ over some connected
closed Riemannian manifold $N$ with nonnegative Ricci curvature (cf.
\cite{I}). Note that in the Kahler case we do not impose any curvature
assumption on $\Omega$. The proof here is similar to the proof in \cite{HW} of
the aforementioned fact in Riemannian geometry using Reilly's formula.
\end{remark}

\bigskip We now consider $F\in C^{\infty}\left(  \overline{\Omega}\right)  $
which is the solution of the following boundary value problem%
\begin{equation}
\left\{
\begin{array}
[c]{ccc}%
\square F=\left(  m+1\right)  & \text{on} & \overline{\Omega},\\
F=0 & \text{on} & \Sigma.
\end{array}
\right.  \label{bv}%
\end{equation}

\bigskip Note that $F$ is real. Denote $\chi=\frac{\partial F}{\partial\nu}$.
By the strong maximum principle and the Hopf Lemma%
\[
F<0\text{ on }\Omega\text{; }\chi>0\text{ on }\Sigma\text{.}%
\]

\begin{theorem}
\label{invHb}Let $\Omega\subset M$ be a connected precompact domain with
smooth boundary $\Sigma$. If $H_{b}>0$ on $\Sigma$, then%
\begin{equation}
\int_{\Sigma}\frac{1}{H_{b}}\geq\frac{m+1}{m}\left\vert \Omega\right\vert .
\label{invHb-F}%
\end{equation}

\end{theorem}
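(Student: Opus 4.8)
The plan is to test the integral formula \eqref{id} against the solution $F$ of the boundary value problem \eqref{bv}, that is $\square F=m+1$ on $\overline{\Omega}$ and $F=0$ on $\Sigma$. Such a real $F$ exists and is unique by standard elliptic theory (recall $\square$ is half the real Laplacian), and by the maximum principle and the Hopf lemma $\chi=\partial F/\partial\nu>0$ on $\Sigma$, as already noted. First I would feed this $F$ into \eqref{id}. Since $f=F|_{\Sigma}\equiv0$, all tangential derivatives of $f$ vanish, so $f_{\alpha}=f_{\overline{\alpha}}=0$ and $\square_{b}f=0$; consequently the first and second boundary integrals in \eqref{id} drop out entirely. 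Moreover $Tf=0$ along $\Sigma$, hence $\overline{Z}F=\chi/\sqrt{2}$ there, so only the last boundary term survives and \eqref{id} collapses to
\[
\sqrt{2}\int_{\Omega}\left(|\square F|^{2}-|D^{1,1}F|^{2}\right)=\frac{1}{2\sqrt{2}}\int_{\Sigma}H_{b}\chi^{2}.
\]

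Next I would bound the interior integrand from above by a pointwise Cauchy--Schwarz estimate. Writing $\square F=\sum_{i}F_{i\overline{i}}$ as the trace of the complex Hessian, one has $|D^{1,1}F|^{2}\geq\sum_{i}|F_{i\overline{i}}|^{2}\geq\frac{1}{m+1}|\square F|^{2}$, so that $|\square F|^{2}-|D^{1,1}F|^{2}\leq\frac{m}{m+1}|\square F|^{2}=m(m+1)$ at every point, using $\square F=m+1$. Inserting this into the displayed identity yields the clean bound $\int_{\Sigma}H_{b}\chi^{2}\leq 4m(m+1)\left\vert\Omega\right\vert$.

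Finally I would convert this into the asserted estimate by a second Cauchy--Schwarz, now on the boundary. The divergence theorem applied to $\Delta F=2\square F=2(m+1)$ gives $\int_{\Sigma}\chi=2(m+1)\left\vert\Omega\right\vert$. Splitting $\chi=(\sqrt{H_{b}}\,\chi)\cdot H_{b}^{-1/2}$ and applying Cauchy--Schwarz,
\[
\left(\int_{\Sigma}\chi\right)^{2}\leq\left(\int_{\Sigma}H_{b}\chi^{2}\right)\left(\int_{\Sigma}\frac{1}{H_{b}}\right),
\]
so that $\int_{\Sigma}H_{b}^{-1}\geq\left(\int_{\Sigma}\chi\right)^{2}\big/\int_{\Sigma}H_{b}\chi^{2}\geq 4(m+1)^{2}\left\vert\Omega\right\vert^{2}\big/\left(4m(m+1)\left\vert\Omega\right\vert\right)=\frac{m+1}{m}\left\vert\Omega\right\vert$, which is precisely \eqref{invHb-F}.

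Since the heavy lifting is carried out by the already-established formula \eqref{id}, I expect no serious analytic obstacle here; the delicate part is pure bookkeeping. The point to get right is the complete vanishing of the first two boundary integrals together with the exact constants, in particular tracking the factors of $\sqrt{2}$ and the normalization $\square=\frac{1}{2}\Delta$ so that the two Cauchy--Schwarz losses cancel exactly and produce the sharp ratio $(m+1)/m$. I would also record that equality forces $D^{1,1}F$ to be a pointwise multiple of the identity and $\chi$ to be proportional to $H_{b}^{-1}$, which is presumably the entry point for the rigidity analysis behind Theorem D.
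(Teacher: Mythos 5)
Your proposal is correct and follows essentially the same route as the paper: apply the identity \eqref{id} to the solution of \eqref{bv}, observe that $f\equiv 0$ kills all boundary terms except the $H_b|\overline{Z}F|^2$ one, bound $|\square F|^2-|D^{1,1}F|^2$ by the trace Cauchy--Schwarz inequality, and combine with Cauchy--Schwarz on $\int_\Sigma\chi=2(m+1)|\Omega|$. All your constants check out, and your closing remark on the equality case matches the paper's subsequent rigidity discussion.
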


\begin{proof}
Integrating the equation (\ref{bv}) yields%
\begin{align*}
\left(  m+1\right)  \left\vert \Omega\right\vert  &  =\frac{1}{2}\int_{\Sigma
}\chi\\
&  \leq\frac{1}{2}\left(  \int_{\Sigma}H_{b}\chi^{2}\right)  ^{1/2}\left(
\int_{\Sigma}\frac{1}{H_{b}}\right)  ^{1/2}.
\end{align*}
Thus
\begin{equation}
\frac{1}{4}\int_{\Sigma}H_{b}\chi^{2}\geq\left(  m+1\right)  ^{2}\left\vert
\Omega\right\vert ^{2}/\int_{\Sigma}\frac{1}{H_{b}}. \label{intc}%
\end{equation}
By the integral identity (\ref{id}) applied to $F$ (noting $\overline
{Z}F=\frac{1}{\sqrt{2}}H_{b}$) we obtain%
\begin{align*}
\frac{1}{4}\int_{\Sigma}H_{b}\chi^{2}  &  =\int_{\Omega}\left\vert \square
F\right\vert ^{2}-\left\vert D^{1,1}F\right\vert ^{2}\\
&  \leq\left(  1-\frac{1}{m+1}\right)  \int_{\Omega}\left\vert \square
F\right\vert ^{2}\\
&  =\left(  m+1\right)  m\left\vert \Omega\right\vert .
\end{align*}
Combined with (\ref{intc}) this implies
\[
\int_{\Sigma}\frac{1}{H_{b}}\geq\frac{m+1}{m}\left\vert \Omega\right\vert .
\]

\end{proof}

\begin{remark}
\bigskip The theorem and its proof are similar to the following result of Ros
\cite{R}:

Let $(M^{n},g)$ be a compact Riemannian manifold with boundary. If $Ric\geq0$
and the mean curvature $H$ of $\partial M$ is positive, then%
\[
\int_{\partial M}\frac{1}{H}d\sigma\geq\frac{n}{n-1}V.
\]
The equality holds iff $M$ is isometric to an Euclidean ball.

But Theorem \ref{invHb} is valid in any Khaler manifold: there is no curvature
assumption on $\Omega$.
\end{remark}

\begin{remark}
It is clear from the proof that if equality holds in (\ref{invHb-F}) we must
have $D^{1,1}F=I$ and $H_{b}\chi=a$ (a constant) on $\Sigma$. In fact the
first identity implies the second (see below). Therefore on any Kahler
manifold if $F$ is a (local) Kahler potential and $c$ a regular value for $F$
s.t. $\left\{  F\leq c\right\}  $ is compact, then for $\Omega=\left\{
F<c\right\}  $ we have equality in (\ref{invHb-F}).
\end{remark}

As an immediate corollary, we obtain the following isoperimetric inequality in
terms of a positive lower bound for $H_{b}$.

\begin{theorem}
\label{iso}\bigskip Let $\Omega\subset M$ be a connected precompact domain
with smooth boundary $\Sigma$. Let $c=\inf_{\Sigma}H_{b}$. If $c>0$, then%
\[
m\left\vert \Sigma\right\vert \geq c\left(  m+1\right)  \left\vert
\Omega\right\vert .
\]

\end{theorem}

\bigskip

We now analyze the equality case in Theorem \ref{iso}. From the proof we must have

\begin{enumerate}
\item $D^{1,1}F=I$, i.e. for $\left(  1,0\right)  $-vectors $X\,,Y$%
\begin{equation}
D^{2}F\left(  X,\overline{Y}\right)  =\left\langle X,\overline{Y}\right\rangle
. \label{HF}%
\end{equation}

\item $H_{b}\equiv c$ and $\chi$ is a positive constant.
\end{enumerate}

\bigskip

\begin{lemma}
On $\Sigma$ we have $\chi c=2m$ and for $X\,,Y\in T^{1,0}\Sigma$
\begin{align*}
\Pi\left(  X,\overline{Y}\right)   &  =\frac{c}{2m}\left\langle X,\overline
{Y}\right\rangle ,\\
\Pi\left(  T,X\right)   &  =0.
\end{align*}

\end{lemma}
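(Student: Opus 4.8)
The plan is to feed the two equality conditions---$D^{1,1}F=I$ together with the constancy of $H_b\equiv c$ and of $\chi$---back into the boundary computations already carried out in the proof of the integral formula, now specialized to $f=F|_\Sigma\equiv 0$. The two starting observations are: first, since $F$ vanishes on $\Sigma$, every \emph{tangential} first derivative of $F$ vanishes there, so $X_\alpha f=\overline{X_\alpha}f=Tf=0$; second, because $\nu F=\chi$ and $Tf=0$ we have $ZF=\overline{Z}F=\chi/\sqrt2$ on $\Sigma$, and this is a \emph{constant} function on $\Sigma$ since $\chi$ is constant.

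First I would prove $\Pi(T,X)=0$. The proof of the integral formula records $D^2F(Z,\overline{X_\alpha})=\overline{X_\alpha}ZF-\sqrt{-1}\,\Pi(T,\overline{X_\alpha})ZF-\sqrt2\,\Pi(\overline{X_\alpha},\overline{X_\beta})X_\beta f$. With $f\equiv 0$ the last term drops; and since $ZF$ is constant along $\Sigma$ while $\overline{X_\alpha}$ is tangent to $\Sigma$, the term $\overline{X_\alpha}ZF$ vanishes too. Hence $D^2F(Z,\overline{X_\alpha})=-\sqrt{-1}\,\Pi(T,\overline{X_\alpha})\,\chi/\sqrt2$. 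On the other hand $D^{1,1}F=I$ forces $D^2F(Z,\overline{X_\alpha})=\langle Z,\overline{X_\alpha}\rangle=0$, and since $\chi>0$ this gives $\Pi(T,\overline{X_\alpha})=0$; conjugating (using that $T$ is real) yields $\Pi(T,X)=0$ for all $X\in T^{1,0}\Sigma$.

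Next I would compute the Hessian in the tangential directions. Expanding $D^2F(X_\alpha,\overline{X_\beta})=X_\alpha\overline{X_\beta}F-\nabla_{X_\alpha}\overline{X_\beta}F$ and decomposing $\nabla_{X_\alpha}\overline{X_\beta}$ in the unitary frame $\{X_i,\overline{X_i}\}$, only the normal components $\langle\nabla_{X_\alpha}\overline{X_\beta},\overline Z\rangle\,ZF$ and $\langle\nabla_{X_\alpha}\overline{X_\beta},Z\rangle\,\overline Z F$ survive on $\Sigma$, all purely tangential derivatives of $F$ being zero and $X_\alpha\overline{X_\beta}F=0$ since $\overline{X_\beta}F\equiv 0$ there. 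Using metric compatibility with $\nabla_{X_\alpha}\nu=AX_\alpha$, $\nabla_{X_\alpha}T=JAX_\alpha$, and the skew-adjointness of $J$---the same computation that gives $\langle\nabla_{X_\alpha}Z,\overline{X_\beta}\rangle=\sqrt2\,\Pi(X_\alpha,\overline{X_\beta})$ in the text---I expect the $\overline Z$-coefficient to vanish and the $Z$-coefficient to produce $D^2F(X_\alpha,\overline{X_\beta})=\chi\,\Pi(X_\alpha,\overline{X_\beta})$. Comparing with $D^{1,1}F=I$ then gives $\Pi(X_\alpha,\overline{X_\beta})=\chi^{-1}\delta_{\alpha\beta}=\chi^{-1}\langle X_\alpha,\overline{X_\beta}\rangle$.

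Finally, taking the trace closes the loop: $H_b=2\sum_\alpha\Pi(X_\alpha,\overline{X_\alpha})=2m/\chi$, and since $H_b\equiv c$ this is exactly $\chi c=2m$; substituting $\chi^{-1}=c/(2m)$ into the previous line gives $\Pi(X,\overline Y)=\tfrac{c}{2m}\langle X,\overline Y\rangle$. (Equivalently, $\chi c=2m$ falls straight out of the diagonal identity $D^2F(X_\alpha,\overline{X_\alpha})=\tfrac{H_b}{\sqrt2}\overline Z F$ displayed in the proof of the integral formula, set equal to $m$ by $D^{1,1}F=I$.) The only delicate point---and the step I expect to be the main, if modest, obstacle---is the bookkeeping in the covariant-derivative expansion of $\nabla_{X_\alpha}\overline{X_\beta}$: one must verify that the $\overline Z$-component contributes nothing and correctly track the factor $\sqrt2$ and the sign coming from $J\overline{X_\beta}=-\sqrt{-1}\,\overline{X_\beta}$, so that the coefficient of $\Pi(X_\alpha,\overline{X_\beta})$ comes out to exactly $\chi$.
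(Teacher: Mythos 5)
Your proposal is correct and follows essentially the same route as the paper: both arguments evaluate the components $D^{2}F\left(X_{\alpha},\overline{X_{\beta}}\right)$ and $D^{2}F\left(X_{\alpha},\overline{Z}\right)$ of the complex Hessian along $\Sigma$, use $F|_{\Sigma}=0$ and the constancy of $\chi$ to reduce them to $\chi\,\Pi\left(X_{\alpha},\overline{X_{\beta}}\right)$ and a multiple of $\chi\,\Pi\left(T,X_{\alpha}\right)$, and compare with $D^{1,1}F=I$ before taking the trace. The only difference is organizational: you recycle the boundary-term formulas already derived in the proof of the integral identity instead of recomputing $F_{\alpha\overline{\beta}}$ and $F_{\alpha\overline{0}}$ from scratch, which is a legitimate shortcut.
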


\begin{proof}
Working with a unitary frame along $\Sigma$ we have by (\ref{HF})%
\begin{align*}
\delta_{\alpha\beta}  &  =F_{\alpha\overline{\beta}}\\
&  =X_{\alpha}\overline{X_{\beta}}F-\left(  \nabla_{X_{\alpha}}\overline
{X_{\beta}}\right)  F\\
&  =\left\langle \nabla_{X_{\alpha}}\nu,\overline{X_{\beta}}\right\rangle
\chi\\
&  =\Pi\left(  X_{\alpha},\overline{X_{\beta}}\right)  \chi,
\end{align*}
as $F|_{\Sigma}=0$. Therefore $\Pi\left(  X_{\alpha},\overline{X_{\beta}%
}\right)  =\frac{1}{\chi}\delta_{\alpha\beta}$. Taking trace yields $\chi
c=2m$. Similarly, as $\chi$ is constant%
\begin{align*}
0  &  =F_{\alpha\overline{0}}\\
&  =X_{\alpha}\overline{Z}F-\left(  \nabla_{X_{\alpha}}\overline{Z}\right)
F\\
&  =X_{\alpha}\chi-\left\langle \nabla_{X_{\alpha}}\overline{Z},\nu
\right\rangle \chi\\
&  =\left\langle \nabla_{X_{\alpha}}\nu,\overline{Z}\right\rangle \chi\\
&  =\frac{\sqrt{-1}}{\sqrt{2}}\Pi\left(  X_{\alpha},T\right)  \chi.
\end{align*}
This implies $\Pi\left(  X_{\alpha},T\right)  =0$.
\end{proof}

\begin{remark}
\bigskip Since $c>0$ the 1st identity implies that $\Sigma$ is strictly pseudoconvex.
\end{remark}

Let $A$ be the shape operator on $\Sigma$, i.e. $A:T\Sigma\rightarrow T\Sigma$
is the symmetric endomorphism defined by $Av=\nabla_{v}\nu$. We have
$\Pi\left(  u,v\right)  =\left\langle Au,v\right\rangle $. In the first
identity if we take $X=u-\sqrt{-1}Ju,Y=v-\sqrt{-1}Jv$, we obtain for any
$u,v\in\mathcal{H}$%
\[
\left\langle Au,v\right\rangle +\left\langle AJu,Jv\right\rangle =\frac{c}%
{m}\left\langle u,v\right\rangle .
\]
In other words, restricted on $\mathcal{H}$ we have%
\begin{equation}
A-JAJ=\frac{c}{m}I. \label{jaj}%
\end{equation}
The second identity in the above lemma implies $AT=\alpha T$, where
$\alpha=\Pi\left(  T,T\right)  $. $\ $

\begin{definition}
A hypersurface $\Sigma$ in a complex manifold is called\ is called a Hopf
hypersurface if $T$ is an eigenvector of the shape operator at every point of
$\Sigma$.
\end{definition}

Such hypersurfaces have been studied intensively in $\mathbb{CP}^{m+1}$ and
$\mathbb{CH}^{m+1}$, cf. Niebergall-Ryan \cite{NR} for a detailed survey of
the subject. We will have further discussion on Hopf hypersurfaces in the next section.

\section{\bigskip Hopf hypersurfaces in $\mathbb{CH}^{m+1}$ and $\mathbb{CP}%
^{m+1}$}

In this Section we discuss hypersurfaces in $\mathbb{CP}^{m+1}$ and
$\mathbb{CH}^{m+1}$. For basic facts on $\mathbb{CP}^{m+1}$ and $\mathbb{CH}%
^{m+1}$ we refer to \cite{KN}. We will take an intrinsic approach. The
starting point is that they are the unique simply connected, Kahler manifold
of constant holomorphic sectional curvature $4\kappa$, with $\kappa=-1$ for
$\mathbb{CH}^{m+1}$ and $\kappa=1$ for $\mathbb{CP}^{m+1}$. It follows that
the sectional curvature of a $J$-invariant $2$-plane is $4\kappa$ while the
sectional curvature of a totally real $2$-plane is $\kappa$. The curvature
tensor is explicitly given by%
\begin{align}
R\left(  v_{1},v_{2},v_{3},v_{4}\right)   &  =\kappa\left[  \left\langle
v_{1},v_{3}\right\rangle \left\langle v_{2},v_{4}\right\rangle -\left\langle
v_{1},v_{4}\right\rangle \left\langle v_{2},v_{3}\right\rangle \right.
\label{curv}\\
&  \left.  +\left\langle v_{1},Jv_{3}\right\rangle \left\langle v_{2}%
,Jv_{4}\right\rangle -\left\langle v_{1},Jv_{4}\right\rangle \left\langle
v_{2},Jv_{3}\right\rangle +2\left\langle v_{1},Jv_{2}\right\rangle
\left\langle v_{3},Jv_{4}\right\rangle \right]  .\nonumber
\end{align}
Equivalently, for $\left(  1,0\right)  $-vectors $X,Y,Z,W$%
\[
R\left(  X,\overline{Y},Z,\overline{W}\right)  =-2\kappa\left[  \left\langle
X,\overline{Y}\right\rangle \left\langle Z,\overline{W}\right\rangle
-\left\langle X,\overline{W}\right\rangle \left\langle Z,\overline
{Y}\right\rangle \right]  .
\]

We now focus on $\mathbb{CH}^{m+1}$. Fix a point $o\in\mathbb{CH}^{m+1}$ and
let $r$ be the distance function to $o$. The Hessian of $r$ is given by%
\begin{align}
\nabla_{\nabla r}\nabla r  &  =0,\nabla_{J\nabla r}\nabla r=2\frac{\cosh
2r}{\sinh2r}J\nabla r,\label{Hess}\\
\nabla_{v}\nabla r  &  =\frac{\cosh r}{\sinh r}v,\text{ if }\left\langle
v,\nabla r\right\rangle =\left\langle v,J\nabla r\right\rangle =0.\nonumber
\end{align}

From (\ref{Hess}) it is also clear that on a geodesic sphere of radius $a>0$
the shape operator $A$ has the following form%
\[
AT=2\frac{\cosh2a}{\sinh2a}T,A|_{\mathcal{H}}=\frac{\cosh a}{\sinh a}I.
\]

Let $\Phi=\log\cosh r$. It is a smooth function on $\mathbb{CH}^{m+1}$. A
straightforward calculation shows that $D^{\left(  1,1\right)  }\Phi=I$, i.e.
for any $\left(  1,0\right)  $-vectors $X,Y$%
\[
D^{2}\Phi\left(  X,\overline{Y}\right)  =\left\langle X,\overline
{Y}\right\rangle .
\]
In particular $\square\Phi=m+1$.

\begin{remark}
The existence of $F$ is also clear from the ball model of $\mathbb{CH}^{m+1}$:
the unit ball in $\mathbb{C}^{m+1}$ with the Kahler form%
\[
\omega=-\frac{\sqrt{-1}}{2}\partial\overline{\partial}\log\left(  1-\left\vert
z\right\vert ^{2}\right)  .
\]

\end{remark}

As a simple consequence of the existence of $F$, we prove the following
Minkowski type formula (cf. \cite{M}, where it is derived in a different way).

\begin{proposition}
\label{MFP}Suppose $\Sigma$ is a closed Hopf hypersurface in $\mathbb{CH}%
^{m+1}$. Then%
\begin{equation}
2m\left\vert \Sigma\right\vert =\int_{\Sigma}H_{b}\left\langle \nabla\Phi
,\nu\right\rangle . \label{MF}%
\end{equation}

\end{proposition}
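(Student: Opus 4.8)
The plan is to integrate the intrinsic Laplacian of the restriction $\Phi|_\Sigma$ over the closed hypersurface $\Sigma$, where it integrates to zero, and to read off the Minkowski identity from the resulting boundaryless integration by parts. The first step is the standard relation between the intrinsic and ambient Laplacians of an ambient function restricted to $\Sigma$, namely
\[ \Delta_{\Sigma}(\Phi|_\Sigma) = \Delta_M\Phi - D^2\Phi(\nu,\nu) - H\langle\nabla\Phi,\nu\rangle. \]
Since $\Sigma$ is closed the left-hand side integrates to zero, and since $D^{1,1}\Phi = I$ forces $\Delta_M\Phi = 2\square\Phi = 2(m+1)$, integrating over $\Sigma$ gives at once $\int_{\Sigma} H\langle\nabla\Phi,\nu\rangle = \int_{\Sigma}[2(m+1) - D^2\Phi(\nu,\nu)]$.

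The second step replaces the full mean curvature $H$ by the Hermitian mean curvature $H_b = H - \Pi(T,T)$, so I must control $D^2\Phi(\nu,\nu)$. Writing $\nu = (Z + \overline{Z})/\sqrt 2$ and $T = \tfrac{\sqrt{-1}}{\sqrt 2}(Z - \overline{Z})$, and using that $\Phi$ is real with $D^2\Phi(Z,\overline{Z}) = \langle Z,\overline{Z}\rangle = 1$, the two $(2,0)$ and $(0,2)$ contributions cancel in the sum and one obtains the clean identity $D^2\Phi(\nu,\nu) + D^2\Phi(T,T) = 2$. Thus $D^2\Phi(\nu,\nu)$ is traded for $D^2\Phi(T,T)$ up to the constant $2$.

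The Hopf hypothesis enters in the third step. Because $M$ is Kahler, $J$ is parallel, so $\nabla_T T = J\nabla_T\nu = J(AT)$; when $\Sigma$ is Hopf, $AT = \alpha T$ with $\alpha = \Pi(T,T)$, whence $\nabla_T T = \alpha JT = -\alpha\nu$ is purely normal. Therefore $D^2\Phi(T,T) = T(T\Phi) - (\nabla_T T)\Phi = T(T\Phi) + \Pi(T,T)\langle\nabla\Phi,\nu\rangle$. Substituting this together with the identity of the second step into the integrated relation, the $\Pi(T,T)\langle\nabla\Phi,\nu\rangle$ pieces cancel and the constants combine to $2m$, leaving
\[ \int_{\Sigma} H_b\langle\nabla\Phi,\nu\rangle = 2m\,|\Sigma| + \int_{\Sigma} T(T\Phi). \]

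The final step kills the leftover term as a total divergence: for any smooth $g$ on $\Sigma$ one has $\mathrm{div}_\Sigma(gT) = Tg + g\,\mathrm{div}_\Sigma T = Tg$ by Lemma 2, so $\int_\Sigma Tg = 0$ by the divergence theorem on the closed manifold $\Sigma$; taking $g = T\Phi$ yields $\int_\Sigma T(T\Phi) = 0$ and hence the formula. I expect the main obstacle to be the bookkeeping in steps two and three: ensuring that the $(2,0)$ and $(0,2)$ parts of the real Hessian recombine correctly, and that the Hopf condition is invoked at precisely the right moment so that $D^2\Phi(T,T)$ becomes $\Pi(T,T)\langle\nabla\Phi,\nu\rangle$ modulo the divergence term $T(T\Phi)$. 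Consistently with the theme of the paper, no curvature of $M$ is used beyond the single fact $D^{1,1}\Phi = I$.
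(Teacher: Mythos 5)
Your proposal is correct, and it follows the paper's skeleton for the first two thirds: the splitting $\Delta_M\Phi=\Delta_\Sigma\Phi+D^2\Phi(\nu,\nu)+H\langle\nabla\Phi,\nu\rangle$, the identity $D^2\Phi(\nu,\nu)+D^2\Phi(T,T)=2D^2\Phi(Z,\overline Z)=2$, and the decomposition $D^2\Phi(T,T)=TT\Phi-(\nabla_TT)\Phi$ producing the $\Pi(T,T)\chi$ term are exactly the paper's steps. Where you genuinely diverge is in how the leftover tangential term is killed. The paper keeps $\nabla_T^\Sigma T$ general at that stage, arrives at the pointwise identity $2m=\Delta_\Sigma\Phi-D^2_\Sigma\Phi(T,T)+H_b\chi$, and then integrates the comparison formula (\ref{compare}) between $\square_b$ and $\Delta_\Sigma$ to get $\int_\Sigma D^2_\Sigma\Phi(T,T)=2\operatorname{Re}\int_\Sigma\sqrt{-1}\,\Pi(T,X_\alpha)\overline{X_\alpha}\Phi$, which vanishes because Hopf means $\Pi(T,X_\alpha)=0$. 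You instead invoke Hopf one step earlier: $\nabla_TT=JAT=\alpha JT=-\alpha\nu$ is purely normal, so the tangential remainder is literally $T(T\Phi)$, and $\int_\Sigma T(T\Phi)=\int_\Sigma\mathrm{div}_\Sigma\bigl((T\Phi)T\bigr)=0$ by the paper's Lemma on $\mathrm{div}\,T=0$. (The two uses of the Hopf hypothesis are equivalent: $AT=\alpha T+W$ with $W\in\mathcal H$ gives $\nabla_T^\Sigma T=JW$, so $W=0$ iff $\nabla_T^\Sigma T=0$ iff $\Pi(T,X_\alpha)=0$.) Your route buys a more elementary and self-contained finish -- it bypasses the Kohn Laplacian and formula (\ref{compare}) entirely -- at the cost of not exhibiting the intermediate identity $\int_\Sigma D^2_\Sigma\Phi(T,T)=2\operatorname{Re}\int_\Sigma\sqrt{-1}\,\Pi(T,X_\alpha)\overline{X_\alpha}\Phi$, which the paper's version makes visible and which clarifies exactly which component of $AT$ obstructs the formula for non-Hopf hypersurfaces. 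Both arguments use nothing about the ambient geometry beyond $D^{1,1}\Phi=I$, as you note.
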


\begin{proof}
Let $\Phi=\Phi|_{\Sigma},\chi=\frac{\partial\Phi}{\partial\nu}=\left\langle
\nabla\Phi,\nu\right\rangle $. Then%
\[
2\left(  m+1\right)  =\Delta\Phi|_{\Sigma}=\Delta_{\Sigma}\Phi+H\chi+D^{2}%
\Phi\left(  \nu,\nu\right)  .
\]
For $Z=\left(  \nu-\sqrt{-1}T\right)  /\sqrt{2}$ we have%
\[
1=D^{2}\Phi\left(  Z,\overline{Z}\right)  =\frac{1}{2}\left[  D^{2}\Phi\left(
\nu,\nu\right)  +D^{2}\Phi\left(  T,T\right)  \right]  .
\]
On the other hand%
\begin{align*}
D^{2}\Phi\left(  T,T\right)   &  =TT\Phi-\left(  \nabla_{T}T\right)  \Phi\\
&  =TT\Phi-\left(  \nabla_{T}^{\Sigma}T\right)  \Phi+\Pi\left(  T,T\right)
\chi\\
&  =D^{2}\Phi\left(  T,T\right)  +\Pi\left(  T,T\right)  \chi.
\end{align*}
Combining these identities yields
\[
2m=\Delta_{\Sigma}\Phi-D^{2}\Phi\left(  T,T\right)  +H_{b}\chi.
\]
Integrating over $\Sigma$ yields
\[
2m\left\vert \Sigma\right\vert =\int_{\Sigma}H_{b}\left\langle \nabla\Phi
,\nu\right\rangle -\int_{\Sigma}D^{2}\Phi\left(  T,T\right)  .
\]
Integrating the identity (\ref{compare}) yields%
\[
\int_{\Sigma}D^{2}\Phi\left(  T,T\right)  =2\operatorname{Re}\int_{\Sigma
}\sqrt{-1}\Pi\left(  T,X_{\alpha}\right)  \overline{X_{\alpha}}\Phi.
\]
Since $\Sigma$ is Hopf, the right hand side is clearly zero. Thus we obtain
(\ref{MF}).
\end{proof}

\begin{remark}
On $\mathbb{C}^{m+1}$ consider $\Phi=\left\vert z\right\vert ^{2}/2$. First
recall the classic Minkowski formula: If $\Sigma\subset\mathbb{C}^{m+1}$ is a
closed hypersurface then%
\[
\left(  2m+1\right)  \left\vert \Sigma\right\vert =\int_{\Sigma}H\left\langle
\nabla\Phi,\nu\right\rangle
\]
By the same argument used to prove Proposition (\ref{MFP}) we have the
following formula%
\[
2m\left\vert \Sigma\right\vert =\int_{\Sigma}H_{b}\left\langle \nabla\Phi
,\nu\right\rangle ,
\]
provided that $\Sigma$ is Hopf.
\end{remark}

\bigskip

\bigskip We now discuss the case of $\mathbb{CP}^{m+1}$ endowed with the
Fubini-Study metric normalized to have holomorphic sectional curvature $4$.
The diameter is $\pi/2$. For any $o=\left[  \xi\right]  \in\mathbb{CP}^{m+1}$
($\xi\in\mathbb{C}^{m+2}\backslash\left\{  0\right\}  $) the set of points
whose distance to $o$ equals $\pi/2$ is also its cut-locus
\[
C\left(  o\right)  :=\left\{  \left[  \zeta\right]  \in\mathbb{CP}%
^{m+1}:\left\langle \zeta,\overline{\xi}\right\rangle =0\right\}  ,
\]
which is a totally geodesic $\mathbb{CP}^{m}$ , called a hyperplane. On the
domain
\[
\mathbb{CP}^{m+1}\backslash C\left(  o\right)  =\left\{  p\in\mathbb{CP}%
^{m+1}:d\left(  o,p\right)  <\pi/2\right\}
\]
the function $\Phi=\log\cos r$ is smooth and satisfies
\[
D^{2}\Phi\left(  X,\overline{Y}\right)  =\left\langle X,\overline
{Y}\right\rangle .
\]
for any $\left(  1,0\right)  $-vectors $X,Y$. Therefore $\Delta\Phi=2\square
F=2\left(  m+1\right)  $.

Therefore as in the complex hyperbolic case we have

\begin{proposition}
\label{MFP1}Suppose $\Sigma$ is a closed Hopf hypersurface in $\mathbb{CH}%
^{m}$. If there exists $o\in\mathbb{CP}^{m+1}$ s.t. $\Sigma\cap C\left(
o\right)  =\varnothing$, then%
\[
2m\left\vert \Sigma\right\vert =\int_{\Sigma}H_{b}\left\langle \nabla\Phi
,\nu\right\rangle .
\]

\end{proposition}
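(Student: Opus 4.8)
The plan is to run the proof of Proposition \ref{MFP} essentially verbatim, with the complex hyperbolic potential replaced by its Fubini--Study analogue $\Phi=\log\cos r$. Inspecting that proof, the only properties of $\Phi$ actually used are the pointwise Hessian identity $D^{2}\Phi(X,\overline{Y})=\langle X,\overline{Y}\rangle$ for $(1,0)$-vectors $X,Y$, together with its consequences $\Delta\Phi=2(m+1)$ and $D^{2}\Phi(Z,\overline{Z})=1$. These are exactly the relations recorded above for $\Phi=\log\cos r$ on the open set $\mathbb{CP}^{m+1}\setminus C(o)$. Thus the single point where the hypothesis enters is smoothness of $\Phi$ along $\Sigma$: since $\log\cos r$ blows up on the cut locus $C(o)$, one needs $\Sigma\cap C(o)=\varnothing$ so that $\Sigma$ lies in the domain of $\Phi$ and every manipulation below is legitimate. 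I expect this domain-of-definition issue to be the only genuine obstacle, and it is resolved precisely by the stated hypothesis.

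With $\Phi$ restricted to $\Sigma$ and $\chi=\langle\nabla\Phi,\nu\rangle$, I would first split the ambient Laplacian along $\Sigma$,
\[
2(m+1)=\Delta_{\Sigma}\Phi+H\chi+D^{2}\Phi(\nu,\nu),
\]
then use $D^{2}\Phi(Z,\overline{Z})=1$ with $Z=(\nu-\sqrt{-1}T)/\sqrt{2}$ to get $D^{2}\Phi(\nu,\nu)+D^{2}\Phi(T,T)=2$, and the Gauss relation $\nabla_{T}T=\nabla_{T}^{\Sigma}T-\Pi(T,T)\nu$ to convert the ambient Hessian $D^{2}\Phi(T,T)$ into its intrinsic counterpart. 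Recalling $H_{b}=H-\Pi(T,T)$, these combine to the pointwise identity $2m=\Delta_{\Sigma}\Phi-D^{2}\Phi(T,T)+H_{b}\chi$ on $\Sigma$, where now $D^{2}\Phi(T,T)$ denotes the intrinsic Hessian. Integrating over the closed hypersurface $\Sigma$ annihilates $\int_{\Sigma}\Delta_{\Sigma}\Phi$ by the divergence theorem and leaves
\[
2m\left\vert \Sigma\right\vert =\int_{\Sigma}H_{b}\langle\nabla\Phi,\nu\rangle-\int_{\Sigma}D^{2}\Phi(T,T).
\]

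It remains to show $\int_{\Sigma}D^{2}\Phi(T,T)=0$. For this I would integrate the comparison identity (\ref{compare}) with $f=\Phi$ over $\Sigma$; the terms $\int_{\Sigma}\square_{b}\Phi$ and $\int_{\Sigma}\Delta_{\Sigma}\Phi$ both vanish (the former because $\square_{b}=-\overline{\partial}_{b}^{\ast}\overline{\partial}_{b}$ pairs against the constant $1$, which is CR). Taking real parts and noting that $\Phi$, $H_{b}$ and $T\Phi$ are real, so that the $\sqrt{-1}H_{b}T\Phi$ term drops out, yields
\[
\int_{\Sigma}D^{2}\Phi(T,T)=2\operatorname{Re}\int_{\Sigma}\sqrt{-1}\,\Pi(T,X_{\alpha})\overline{X_{\alpha}}\Phi.
\]
Finally the Hopf hypothesis $AT=\alpha T$ forces $\Pi(T,X_{\alpha})=\langle AT,X_{\alpha}\rangle=\alpha\langle T,X_{\alpha}\rangle=0$, since each $X_{\alpha}\in T^{1,0}\Sigma$ is orthogonal to $T$; hence the right-hand side is zero and the claimed formula $2m\left\vert \Sigma\right\vert =\int_{\Sigma}H_{b}\langle\nabla\Phi,\nu\rangle$ follows.
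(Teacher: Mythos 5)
Your proposal is correct and follows essentially the same route as the paper, which itself proves the $\mathbb{CP}^{m+1}$ case by repeating the argument of Proposition \ref{MFP} with $\Phi=\log\cos r$ on $\mathbb{CP}^{m+1}\setminus C(o)$, using only $D^{2}\Phi(X,\overline{Y})=\langle X,\overline{Y}\rangle$ and the integrated identity (\ref{compare}) together with the Hopf condition to kill $\int_{\Sigma}D^{2}\Phi(T,T)$. Your added justifications (vanishing of $\int_{\Sigma}\square_{b}\Phi$ by duality against the constant CR function, and discarding the purely imaginary $\sqrt{-1}H_{b}T\Phi$ term by taking real parts) are exactly the steps the paper leaves implicit.
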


Again, this formula was first proved by Miquel \cite{M}.

Similar to the complex hyperbolic case, on a geodesic sphere of radius
$a\in\left(  0,\pi/2\right)  $ in $\mathbb{CP}^{m+1}$ the shape operator $A$
has the following form%
\[
AT=2\frac{\cos2a}{\sin2a}T,A|_{\mathcal{H}}=\frac{\cos a}{\sin a}I.
\]
More generally let $\Sigma$ be a tube over a totally geodesic $\mathbb{CP}%
^{k}$, i.e.
\[
\Sigma=\left\{  p\in\mathbb{CP}^{m+1}:d\left(  p,\mathbb{CP}^{k}\right)
=a\right\}
\]
for some $a\in\left(  0,\pi/2\right)  $. Then the shape operator $A$ has three eigenvalues:

\begin{itemize}
\item $\lambda_{1}=2\cos2a/\sin2a$ of multiplicity $1$,

\item $\lambda_{2}=\cos a/\sin a$ of multiplicity $2\left(  m-k\right)  $,

\item $\lambda_{3}=-\sin a/\cos a$ of multiplicity $2k$.
\end{itemize}

\bigskip

\section{\bigskip Uniqueness results in $\mathbb{CH}^{m+1}$ and $\mathbb{CP}%
^{m+1}$}

We now prove the following rigidity result.

\begin{theorem}
\label{rig}Let $M$ be a simply connected Kahler manifold of constant
holomorphic sectional curvature $4\kappa$ with $\dim_{\mathbb{C}}M=m+1\geq2$.
In other words $M=\mathbb{CP}^{m+1}$ if $\kappa=1$; $M=\mathbb{CH}^{m+1}$ if
$\kappa=-1$. Let $\Omega\subset M$ be a connected precompact domain with
smooth boundary $\Sigma$. Let $c=\inf_{\Sigma}H_{b}$. If \ equality holds in
Theorem \ref{iso}, i.e. $m\left\vert \Sigma\right\vert =c\left(  m+1\right)
\left\vert \Omega\right\vert $, then $\Omega$ is a geodesic ball.
\end{theorem}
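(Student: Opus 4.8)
My starting point is the equality analysis carried out just after Theorem~\ref{iso}. For the solution $F$ of the boundary value problem (\ref{bv}), equality forces $D^{1,1}F=I$ \emph{on all of} $\Omega$, together with the boundary data $H_{b}\equiv c$, $\chi\equiv 2m/c$, and (from the Lemma giving $\chi c=2m$) the relations $\Pi(X,\overline{Y})=\tfrac{c}{2m}\langle X,\overline{Y}\rangle$ for $X,Y\in T^{1,0}\Sigma$ and $\Pi(T,X)=0$, so that $\Sigma$ is Hopf. I want to stress that these relations pin down only the $J$-invariant part of $A|_{\mathcal{H}}$; the part of the shape operator anticommuting with $J$ is \emph{not} controlled by the boundary data alone. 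The whole point, therefore, is to exploit the much stronger interior condition $D^{1,1}F=I$---the statement that $F$ is a global Kahler potential for the metric---to identify $F$, up to an additive constant, with the radial potential $\Phi=\log\cosh r_{o}$ of Section~5 (respectively $\log\cos r_{o}$ when $\kappa=1$) for a suitable center $o$. Once this is done, $\Sigma=\{F=0\}$ is a level set of $r_{o}$, hence a geodesic sphere, and $\Omega$ is the ball it bounds.

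To carry this out I would establish a first integral for $F$. Differentiating $|\nabla F|^{2}=2F_{a}F_{\overline{a}}$ in a unitary frame and using $F_{k\overline{a}}=\delta_{ka}$ gives $\nabla_{k}|\nabla F|^{2}=2F_{ka}F_{\overline{a}}+2F_{k}$, so everything hinges on the radial contraction $F_{ka}F_{\overline{a}}$ of the holomorphic Hessian $\phi_{ij}:=F_{ij}$. Because $F_{i\overline{j}}=\delta_{ij}$ is constant, the Ricci identity $F_{i,\overline{j}k}=F_{i,k\overline{j}}+R_{k\overline{j}i\overline{l}}F_{l}$ collapses to the closed first-order system $\nabla_{\overline{k}}\phi_{ij}=-R_{j\overline{k}i\overline{l}}F_{l}$, whose right-hand side is, by the constant-holomorphic-sectional-curvature tensor (\ref{curv}), an explicit expression linear in $\nabla F$. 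On the model one checks that $\Phi_{\alpha\beta}=\Phi_{Z\alpha}=0$ while only the purely radial component $\Phi_{ZZ}$ survives, which is equivalent to $|\nabla\Phi|^{2}=\kappa(e^{-2\Phi}-1)$. I would use the system above, integrated against the boundary data on $\Sigma$, to prove that $|\nabla F|^{2}=G(F)$ for the corresponding function $G$; equivalently, that the contraction obeys $F_{ka}F_{\overline{a}}=-\bigl(1+\kappa+|\nabla F|^{2}\bigr)F_{k}$. This first integral shows that the level sets of $F$ are the equidistant hypersurfaces about the unique critical point $o$ of $F$, and that $F-\log\cosh r_{o}$ is constant.

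The main obstacle is precisely the vanishing of the non-radial holomorphic Hessian, i.e. proving $\phi_{\alpha\beta}\equiv0$ (umbilicity of $\Sigma$ in the contact directions), since the equality conditions supply only the $J$-invariant curvatures there. I would attack this through the system $\nabla_{\overline{k}}\phi_{ij}=-R_{j\overline{k}i\overline{l}}F_{l}$: pairing it with $\overline{\phi_{ij}}$ and integrating by parts over $\Omega$, the curvature term acquires a definite sign governed by $\kappa$, while $D^{1,1}F=I$ controls the boundary contribution through $\chi$ and $H_{b}$, and I expect this to force the tangential components of $\phi$ to vanish. Two smaller points need care and should follow from the same circle of ideas: that $F$ has a single nondegenerate critical point $o$, which I would extract from strict plurisubharmonicity ($D^{1,1}F=I>0$) together with $F|_{\Sigma}=0$ and $\partial_{\nu}F\equiv\mathrm{const}$; and that $c/2m>1$ when $\kappa=-1$ (so that $r_{o}=\coth^{-1}(c/2m)$ is finite), which compactness of $\Sigma$ should enforce, since the non-compact constant-curvature models---horospheres and tubes over totally geodesic complex hyperplanes---cannot bound a precompact $\Omega$. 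The $\mathbb{CP}^{m+1}$ case is identical, with $\log\cosh$ replaced by $\log\cos$ on the complement of the cut locus $C(o)$ of the center.
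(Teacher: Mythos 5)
You have extracted the equality conditions correctly ($D^{1,1}F=I$ on $\Omega$, $H_{b}\equiv c$, $\chi$ constant, $\Sigma$ Hopf, and the identity (\ref{ajaj})), and you have correctly located the difficulty in the part of $A|_{\mathcal{H}}$ anticommuting with $J$ --- equivalently the $(2,0)$-part $\Pi(X,Y)$, equivalently $\phi_{\alpha\beta}=F_{\alpha\beta}$ along $\Sigma$, since $F|_{\Sigma}=0$ makes $\phi_{\alpha\beta}|_{\Sigma}$ a multiple of $\chi\,\Pi(X_{\alpha},X_{\beta})$. But the step that would close the argument, proving $\phi_{\alpha\beta}\equiv0$ (equivalently the first integral $|\nabla F|^{2}=G(F)$), is exactly the step you do not carry out: ``I expect this to force the tangential components of $\phi$ to vanish'' is the whole theorem. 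The mechanism you propose is doubtful as stated: in the integration by parts of $\nabla_{\overline{k}}\phi_{ij}=-R_{j\overline{k}i\overline{l}}F_{l}$ against $\overline{\phi_{ij}}$, the boundary contribution is governed precisely by the uncontrolled quantity $\Pi(X_{\alpha},X_{\beta})$, and the interior curvature term reverses sign with $\kappa$, so no single signed identity can serve both $\mathbb{CH}^{m+1}$ and $\mathbb{CP}^{m+1}$. The flat model shows how little the interior equation alone gives: there $D^{1,1}F=I$ only says $F=|z|^{2}/2+2\operatorname{Re}h$ with $h$ holomorphic, and $\{F=0\}$ can be an ellipsoid, so the rigidity must be wrung from the overdetermined boundary data ($\chi$ and $H_{b}$ constant) in a quantitative way your outline never isolates. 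Your auxiliary claim is also not free: strict plurisubharmonicity does not by itself yield a single nondegenerate critical point (e.g.\ $|z_{1}|^{2}+|z_{2}|^{2}+\operatorname{Re}(z_{1}^{2})$ has a critical line).

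The paper closes this gap by entirely different means, staying on $\Sigma$: since $\Sigma$ is Hopf, $\alpha=\Pi(T,T)$ is constant, and the Codazzi equation yields (\ref{Cod}); combined with (\ref{ajaj}) this forces every principal curvature $\lambda$ of $A|_{\mathcal{H}}$ to satisfy $\lambda\left(\frac{c}{m}-\lambda\right)=\frac{\alpha c}{2m}+\kappa$, so there are at most two eigenvalues $\lambda$ and $\lambda^{\ast}=\frac{c}{m}-\lambda$ on $\mathcal{H}$. The case $\lambda\neq\lambda^{\ast}$ is then excluded by Berndt's fundamental formula (\ref{fundB}) together with a comparison at the point of $\Sigma$ farthest from an interior point (Lemma \ref{comp}), giving $\lambda,\lambda^{\ast}>1$ when $\kappa=-1$ and $\lambda,\lambda^{\ast}>0$ when $\kappa=1$. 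These three inputs --- the Codazzi identity for Hopf hypersurfaces, Berndt's formula, and the farthest-point comparison --- have no counterpart in your proposal, and without them (or a genuinely completed substitute such as your proposed integral estimate) the proof is not done. The final step is the only place where the two routes agree in substance: once $A$ is that of a sphere, your identification of the level sets of the radial potential and the paper's degeneration of the normal exponential map $\Phi_{r}$ are equivalent.
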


Suppose $\Sigma$ has constant Hermitian mean curvature $c$. Let $\Omega$ be
the domain enclosed by $\Sigma$ (in the case of $\mathbb{CP}^{m+1}$ we choose
$\Omega$ to be the one disjoint from the hyperplane in the assumption). By
Proposition \ref{MFP} or Proposition \ref{MFP1}, Since $m\left\vert
\Sigma\right\vert =c\left(  m+1\right)  \left\vert \Omega\right\vert $, we
have $c>0$. By the discussion in Section 4, we know that there exists $F\in
C^{\infty}\left(  \overline{\Omega}\right)  $ s.t. $F=0$ on $\Sigma$ and
$D^{1,1}F=I$. Moreover $\Sigma$ is a Hopf hypersurface,
\begin{equation}
AT=\alpha T \label{Hopf}%
\end{equation}
and its shape operator satisfies on $\mathcal{H}$
\begin{equation}
A-JAJ=\frac{c}{m}I. \label{ajaj}%
\end{equation}

\begin{lemma}
The function $\alpha$ is constant.
\end{lemma}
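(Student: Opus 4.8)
The plan is to derive the constancy of $\alpha=\Pi(T,T)$ from the Codazzi equation of $\Sigma$ in the complex space form $M$, together with the Hopf condition $AT=\alpha T$ that was just extracted from the equality case. First I would record the relevant structure equations. Since $M$ is Kahler and $T=J\nu$, differentiating gives $\nabla_X T=J(AX)$ for $X$ tangent to $\Sigma$; decomposing $J$ along $\Sigma$ as $JX=\phi X-\langle X,T\rangle\nu$, so that $\phi$ is the tangential part of $J$ and $\phi T=0$, the Gauss formula yields the induced-connection identity $\nabla^{\Sigma}_X T=\phi AX$. In particular $\nabla^{\Sigma}_T T=\phi AT=\alpha\phi T=0$.

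Next I would feed the explicit curvature tensor (\ref{curv}) into Codazzi. Because $\langle R(X,Y)\nu,\nu\rangle=0$ the vector $R(X,Y)\nu$ is tangent to $\Sigma$, and substituting $J\nu=T$ and $\theta(\cdot)=\langle\cdot,T\rangle$ produces an expression built from $\phi X$, $\phi Y$ and $T$. Specialising the Codazzi equation $(\nabla^{\Sigma}_X A)Y-(\nabla^{\Sigma}_Y A)X=R(X,Y)\nu$ to $Y=T$ and using $AT=\alpha T$, $\phi T=0$, $\nabla^{\Sigma}_T T=0$, the right-hand side collapses to a multiple of $\phi X$, giving $(\nabla^{\Sigma}_X A)T-(\nabla^{\Sigma}_T A)X=\kappa\,\phi X$. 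I would then pair this with $T$. On one side $(\nabla^{\Sigma}_X A)T=(X\alpha)T+\alpha\phi AX-A\phi AX$, and since $\phi AX\perp T$ and $\langle A\phi AX,T\rangle=\langle\phi AX,AT\rangle=0$, its $T$-component is just $X\alpha$. On the other side $\nabla^{\Sigma}_T A$ is self-adjoint and $(\nabla^{\Sigma}_T A)T=(T\alpha)T$, so $\langle(\nabla^{\Sigma}_T A)X,T\rangle=(T\alpha)\theta(X)$. As $\langle\phi X,T\rangle=0$, this yields $X\alpha=(T\alpha)\,\theta(X)$ for every tangent $X$; equivalently $d\alpha=(T\alpha)\,\theta$, so $\alpha$ is already constant along the contact distribution $\mathcal{H}$.

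It then remains to show $\beta:=T\alpha\equiv 0$. Taking the exterior derivative of $d\alpha=\beta\,\theta$ gives $0=d\beta\wedge\theta+\beta\,d\theta$. Evaluating on a pair $X,Y\in\mathcal{H}$, where $\theta$ vanishes, annihilates the first term and leaves $\beta\,d\theta(X,Y)=0$. In the present equality situation the Section~4 lemma gives $\Pi(X,\overline{Y})=\tfrac{c}{2m}\langle X,\overline{Y}\rangle$ with $c>0$, so $\Sigma$ is strictly pseudoconvex; via $d\theta(X,\overline{Y})=\sqrt{-1}\,L(X,Y)$ this makes $d\theta$ nondegenerate on $\mathcal{H}$. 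Hence $\beta$ cannot be nonzero at any point, so $d\alpha=0$, and since $\Sigma$ is connected (by the Proposition in Section~4), $\alpha$ is constant.

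The main obstacle is the bookkeeping in the first two steps: correctly evaluating $R(X,Y)\nu$ from (\ref{curv}) with consistent sign and index conventions, and cleanly isolating the $T$-component of the specialised Codazzi identity to obtain $d\alpha=(T\alpha)\theta$. Once that relation is in hand, the concluding exterior-derivative argument, powered by the strict pseudoconvexity furnished by the equality case, is short and robust to the precise form of the curvature terms (only the vanishing of the $T$-component of $R(X,T)\nu$ is actually used).
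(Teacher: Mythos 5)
Your proof is correct and follows essentially the same route as the paper: both use the Codazzi equation together with the explicit space-form curvature (the only input being that the $T$-component of $R(X,T)\nu$ vanishes) to show $X\alpha=0$ for all $X\in\mathcal{H}$, and then invoke strict pseudoconvexity to conclude $T\alpha=0$. Your closing $d(d\alpha)=d\beta\wedge\theta+\beta\,d\theta=0$ computation is a nice explicit justification of the final step, which the paper states in one line without detail.
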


\begin{proof}
This is a well know fact on Hopf hypersurfaces in complex spaces forms, cf.
\cite{NR}. We provide a direct proof. Let $X$ be a tangential vector field on
$\Sigma$ s.t. $\left\langle X,T\right\rangle =0$. Differentiating (\ref{Hopf})
yields%
\begin{align*}
X\alpha &  =\left\langle \nabla_{X}\nabla_{T}\nu,T\right\rangle \\
&  =\left\langle \nabla_{T}\nabla_{X}\nu,T\right\rangle -\left\langle
\nabla_{\left[  T,X\right]  }\nu,T\right\rangle +R\left(  T,X,\nu,T\right)  .
\end{align*}
From (\ref{Hopf}) we obtain
\begin{align*}
\nabla_{T}T  &  =J\nabla_{T}v\\
&  =JAT\\
&  =-\alpha\nu.
\end{align*}
Thus%
\begin{align*}
\left\langle \nabla_{T}\nabla_{X}\nu,T\right\rangle  &  =T\left\langle
\nabla_{X}\nu,T\right\rangle -\left\langle \nabla_{X}\nu,\nabla_{T}%
T\right\rangle \\
&  =T\Pi\left(  T,X\right)  -\alpha\left\langle \nabla_{X}\nu,\nu\right\rangle
\\
&  =0.
\end{align*}
\ On the other hand%
\begin{align*}
\left\langle \nabla_{\left[  T,X\right]  }\nu,T\right\rangle  &  =\left\langle
\nabla_{T}\nu,\left[  T,X\right]  \right\rangle \\
&  =\alpha\left\langle T,\left[  T,X\right]  \right\rangle \\
&  =\alpha\left\langle T,\nabla_{T}X-\nabla_{X}T\right\rangle \\
&  =\alpha\left\langle T,\nabla_{T}X\right\rangle \\
&  =\alpha\left(  T\left\langle X,T\right\rangle -\left\langle \nabla
_{T}T,X\right\rangle \right) \\
&  =0.
\end{align*}
By the formula for curvature, $R\left(  T,X,\nu,T\right)  =0$. Therefore
$X\alpha=0$.

Since $\Sigma$ is strictly pseudoconvex, $T\alpha=0$ as well. Therefore
$\alpha$ is constant.
\end{proof}

For further discussion, we need the following fact on Hopf hypersurfaces in
complex space forms (Lemma 2.2. in Niebergall and Ryan \cite{NR}). Let
$\varphi:T\Sigma\rightarrow T\Sigma$ be the endomorphism s.t. $\varphi T=0$
and $\varphi|_{\mathcal{H}}=J$.

\begin{lemma}
Let $\Sigma$ be a Hopf hypersurface in a Kahler manifold $M$ of constant
holomorphic sectional curvature $\kappa$. Then
\begin{equation}
A\varphi A-\frac{a}{2}\left(  A\varphi+\varphi A\right)  =\kappa\varphi.
\label{Cod}%
\end{equation}

\end{lemma}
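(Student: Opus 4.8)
The plan is to combine the Codazzi equation for the hypersurface $\Sigma\subset M$ with the explicit curvature tensor (\ref{curv}). With the sign convention $AX=\nabla_{X}\nu$ used here, the Codazzi equation reads
\[
\left(  \nabla_{X}^{\Sigma}A\right)  Y-\left(  \nabla_{Y}^{\Sigma}A\right)
X=\left(  R\left(  X,Y\right)  \nu\right)  ^{\top},
\]
the tangential part of the ambient curvature applied to $\nu$. First I would specialize to $Y=T$ and evaluate the right-hand side from (\ref{curv}). Using $J\nu=T$, the tangential identity $JZ=\varphi Z-\left\langle Z,T\right\rangle \nu$, and the facts that $\varphi T=0$, $\left\vert T\right\vert =1$ and $\varphi$ is skew-symmetric on $T\Sigma$, every term collapses except one, and one obtains $\left(  R\left(  X,T\right)  \nu\right)  ^{\top}=-\kappa\varphi X$, the overall sign being fixed by the curvature convention implicit in (\ref{curv}).

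Next I would record the auxiliary identity $\nabla_{X}^{\Sigma}T=\varphi AX$. This follows from $T=J\nu$ and the parallelism of $J$: indeed $\nabla_{X}T=J\nabla_{X}\nu=J\left(  AX\right)  $, whose tangential component is $\varphi AX$ by the definition of $\varphi$. Feeding this into the Hopf condition (\ref{Hopf}) together with the constancy of $\alpha$ (the previous lemma) gives
\[
\left(  \nabla_{X}^{\Sigma}A\right)  T=\nabla_{X}^{\Sigma}\left(  \alpha
T\right)  -A\nabla_{X}^{\Sigma}T=\alpha\varphi AX-A\varphi AX,
\]
so that the $T$-component of Codazzi is expressed purely through $A$ and $\varphi$.

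The main obstacle is the remaining term $\left(  \nabla_{T}^{\Sigma}A\right)  X$, which the Hopf condition does not control. The key trick is to remove it by antisymmetrization: pair the Codazzi identity (with $Y=T$) against an arbitrary tangent vector $Y$, interchange $X$ and $Y$, and subtract. Since $\nabla_{T}^{\Sigma}A$ is symmetric, the two $\left(  \nabla_{T}^{\Sigma}A\right)  $ contributions cancel, and the skew-symmetry of $\varphi$ gives on the right $-\kappa\left[  \left\langle \varphi X,Y\right\rangle -\left\langle \varphi Y,X\right\rangle \right]  =-2\kappa\left\langle \varphi X,Y\right\rangle $, leaving
\[
\left\langle \left(  \nabla_{X}^{\Sigma}A\right)  T,Y\right\rangle
-\left\langle \left(  \nabla_{Y}^{\Sigma}A\right)  T,X\right\rangle
=-2\kappa\left\langle \varphi X,Y\right\rangle .
\]
Substituting $\left(  \nabla_{X}^{\Sigma}A\right)  T=\alpha\varphi AX-A\varphi AX$ and simplifying with $A^{\ast}=A$, $\varphi^{\ast}=-\varphi$ and the consequent skew-symmetry $\left(  A\varphi A\right)  ^{\ast}=-A\varphi A$, the left-hand side reduces to $\left\langle \alpha\left(  A\varphi+\varphi A\right)  X-2A\varphi AX,Y\right\rangle $. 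As $Y$ is arbitrary, this yields $\alpha\left(  A\varphi+\varphi A\right)  -2A\varphi A=-2\kappa\varphi$, i.e.
\[
A\varphi A-\frac{\alpha}{2}\left(  A\varphi+\varphi A\right)  =\kappa\varphi,
\]
which is (\ref{Cod}). The only delicate points are keeping the curvature sign in (\ref{curv}) consistent throughout so the final sign comes out right, and checking the elementary operator symmetries used in the last simplification.
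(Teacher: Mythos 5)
Your proof is correct and is essentially the paper's argument: both compute $\left(\nabla_{X}^{\Sigma}A\right)T=\left(\alpha I-A\right)\varphi AX$ from the Hopf condition, eliminate the uncontrollable derivative of $A$ by antisymmetrizing the Codazzi identity (you use the symmetry of $\nabla_{T}^{\Sigma}A$ after setting $Y=T$; the paper equivalently uses the symmetry of $\nabla_{X}^{\Sigma}A$ to move $T$ into the second slot), and evaluate the curvature term from (\ref{curv}) as $2\kappa\left\langle X,\varphi Y\right\rangle$. The sign bookkeeping you flag works out consistently with the paper's conventions, and your final identity matches (\ref{Cod}) with $a=\alpha$.
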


This is in fact a simple consequence of the Codazzi equation. We sketch the
proof here. By direct calculation, for any tangent vector $\left(  \nabla
_{X}^{\Sigma}A\right)  T=\left(  aI-A\right)  \varphi AX$. Thus%
\[
\left\langle \left(  \nabla_{X}^{\Sigma}A\right)  Y,T\right\rangle
=\left\langle \left(  \nabla_{X}^{\Sigma}A\right)  T,Y\right\rangle
=\left\langle \left(  aI-A\right)  \varphi AX,Y\right\rangle .
\]
By the Codazzi equation%
\begin{align*}
\left\langle \left(  aI-A\right)  \varphi AX,Y\right\rangle -\left\langle
\left(  aI-A\right)  \varphi AY,X\right\rangle  &  =\left\langle \left(
\nabla_{X}^{\Sigma}A\right)  Y,T\right\rangle -\left\langle \left(  \nabla
_{Y}^{\Sigma}A\right)  X,T\right\rangle \\
&  =R\left(  X,Y,T,\nu\right) \\
&  =2\kappa\left\langle X,\varphi Y\right\rangle ,
\end{align*}
where in the last step we used the curvature formula (\ref{curv}). The
identity (\ref{Cod}) follows easily.

Suppose $u\in\mathcal{H}$ is an eigenvector of $A$, $Au=\lambda u$. From
(\ref{ajaj}) we easily obtain $AJu=\left(  \frac{c}{m}-\lambda\right)  Ju$,
i.e. $Ju$ is also an eigenvector with eigenvalue $\frac{c}{m}-\lambda$.
Applying (\ref{Cod}) yields%
\begin{equation}
\lambda\left(  \frac{c}{m}-\lambda\right)  =\frac{\alpha c}{2m}+\kappa
.\label{qr}%
\end{equation}
This means that besides $\alpha$ the principal curvatures of $\Sigma$ can only
take at most two values, the two roots $\lambda$ and $\lambda^{\ast}=\frac
{c}{m}-\lambda$ of the quadratic equation
\[
x\left(  \frac{c}{m}-x\right)  =\frac{\alpha c}{2m}+\kappa.
\]
Therefore $\Sigma$ is a Hopf hypersurface with constant principal
curvatures.\ Such hypersurfaces in $\mathbb{CP}^{m+1}$ and $\mathbb{CH}^{m+1}$
are completely classified (even locally) by Kimura \cite{K} and Berndt
\cite{B}. \ We could finish the proof of Theorem \ref{rig} by doing a case by
case analysis of the classification list. But there is a more direct approach
which avoids using the classification. All we need is a fundamental formula
from \cite{B}. (Except this formula our proof is self-contained.)

There are two possibilities:

\begin{enumerate}
\item \bigskip The two roots coincide $\lambda=\lambda^{\ast}$.

In this case we have $\lambda=c/2m$ and $A=\lambda I$ on $\mathcal{H}$.

\item The two roots are different $\lambda\neq\lambda^{\ast}$.

In this case we have an orthogonal decomposition $\mathcal{H}=E\oplus JE$,
where $E$ is a real subspace of dimension $m$ and with respect to this
decomposition $A$ is given by the matrix%
\[
\left[
\begin{array}
[c]{cc}%
\lambda I & 0\\
0 & \lambda^{\ast}I
\end{array}
\right]  .
\]

By the fundamental formula in Berndt \cite[Theorem 2]{B} we must have
\begin{equation}
\lambda\lambda^{\ast}+\kappa=0. \label{fundB}%
\end{equation}

\end{enumerate}

For further discussion we discuss the two cases $\mathbb{CH}^{m+1}$ and
$\mathbb{CP}^{m+1}$ separately.

\begin{itemize}
\item $M=\mathbb{CH}^{m+1}$
\end{itemize}

\begin{lemma}
\label{comp}$\alpha>2$ and $\lambda,\lambda^{\ast}>1$.
\end{lemma}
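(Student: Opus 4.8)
The plan is to establish both inequalities at a single, well-chosen point of $\Sigma$ and then propagate them to all of $\Sigma$ using the fact, proved above, that $\alpha$ and the principal curvatures $\lambda,\lambda^{\ast}$ are constant. Concretely, I would fix a point $o\in\Omega$ and let $r=d\left(\cdot,o\right)$ be the distance function, which is smooth on $\mathbb{CH}^{m+1}\setminus\left\{o\right\}$ since $\mathbb{CH}^{m+1}$ is a Hadamard manifold. Since $\overline{\Omega}$ is compact, $r$ attains its maximum over $\Sigma$ at some point $p^{\ast}$, with value $a:=r\left(p^{\ast}\right)>0$; because $\Phi=\log\cosh r$ is strictly plurisubharmonic (its complex Hessian is $I$), the maximum of $r$ over $\overline{\Omega}$ is attained on $\Sigma$, so $\Omega\subset B\left(o,a\right)$ and $\Sigma$ is internally tangent to the geodesic sphere $S\left(o,a\right)$ at $p^{\ast}$.

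The core of the argument is a comparison of shape operators at $p^{\ast}$. At this point the outer unit normal $\nu$ of $\Sigma$ coincides with $\nabla r$, hence with the outer normal of $S\left(o,a\right)$; consequently $T=J\nu$ and the contact distribution $\mathcal{H}$ are the same for $\Sigma$ and for the geodesic sphere at $p^{\ast}$. Writing both hypersurfaces as graphs over their common tangent space and using that $\Sigma$ lies on the inner side of $S\left(o,a\right)$ with first-order contact at $p^{\ast}$, one obtains the inequality of second fundamental forms $A_{\Sigma}\geq A_{S\left(o,a\right)}$ as symmetric endomorphisms at $p^{\ast}$. From (\ref{Hess}) the geodesic sphere of radius $a$ has shape operator $A_{S\left(o,a\right)}T=2\coth\left(2a\right)T$ and $A_{S\left(o,a\right)}|_{\mathcal{H}}=\coth\left(a\right)I$.

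Evaluating the comparison on $T$ and on $\mathcal{H}$ then yields $\alpha=\left\langle A_{\Sigma}T,T\right\rangle\geq 2\coth\left(2a\right)>2$, and $\left\langle A_{\Sigma}v,v\right\rangle\geq\coth\left(a\right)\left\vert v\right\vert^{2}>\left\vert v\right\vert^{2}$ for every nonzero $v\in\mathcal{H}$. Since $\Sigma$ is Hopf, $A_{\Sigma}$ preserves $\mathcal{H}$ and its restriction there has eigenvalues $\lambda,\lambda^{\ast}$; the last estimate forces $\min\left(\lambda,\lambda^{\ast}\right)\geq\coth\left(a\right)>1$. These are pointwise bounds at $p^{\ast}$, but $\alpha,\lambda,\lambda^{\ast}$ are constant on $\Sigma$, so the strict inequalities $\alpha>2$ and $\lambda,\lambda^{\ast}>1$ hold everywhere.

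The step I expect to be the main obstacle is the shape-operator comparison $A_{\Sigma}\geq A_{S\left(o,a\right)}$: one must pin down the orientation so that both normals point outward (away from $o$), then argue carefully with the graph representations and the correct sign relating the Hessian of the graphing function to the second fundamental form, so that the internal-tangency inequality between the two graphs translates into $A_{\Sigma}\geq A_{S\left(o,a\right)}$ rather than the reverse. Everything else — the existence of the farthest point, the inclusion $\Omega\subset B\left(o,a\right)$, and reading off the sphere's principal curvatures from (\ref{Hess}) — is routine.
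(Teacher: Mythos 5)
Your proof is correct and takes essentially the same route as the paper: the paper also picks a point $o$ enclosed by $\Sigma$, takes a farthest point $p$ of $\Sigma$ from $o$ with $a=d\left(o,p\right)$, notes $\nu=\nabla\rho$ there, obtains $\Pi\left(X,X\right)\geq D^{2}\rho\left(X,X\right)$ (i.e.\ $A_{\Sigma}\geq A_{S\left(o,a\right)}$), and reads off $\alpha\geq2\cosh2a/\sinh2a>2$ and $A\geq\left(\cosh a/\sinh a\right)I>I$ on $\mathcal{H}$ from (\ref{Hess}). The only cosmetic difference is that the paper gets the shape-operator comparison directly from the second-derivative test applied to $\rho|_{\Sigma}$ at its maximum, rather than via your internal-tangency/graph argument (which is the step you flagged as delicate but is equivalent).
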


\begin{proof}
This is a simple comparison. Let $o$ be a point enclosed by $\Sigma$ and
$\rho$ the distance function to $o$ on $\mathbb{CH}^{m+1}$. Let $p\in\Sigma$
be a farthest point on $\Sigma$ to $o$ and $a=d\left(  o,p\right)  $. Then at
$p$ we have $\nu=\nabla\rho$ and for any $X\in T_{p}\Sigma$%
\[
\Pi\left(  X,X\right)  \geq D^{2}\rho\left(  X,X\right)  .
\]
By (\ref{Hess}) we have by taking either $X=T$ or $u\in\mathcal{H}$ in the
above inequality
\begin{align*}
\alpha &  \geq2\frac{\cosh2a}{\sinh2a},\\
A  &  \geq\frac{\cosh a}{\sinh a}I\text{ on }\mathcal{H}.
\end{align*}
The second inequality implies that $\lambda$ and $\lambda^{\ast}$ are at least
$\frac{\cosh a}{\sinh a}>1$.
\end{proof}

\begin{lemma}
Let $r>0$ be the number s.t. $\alpha=2\frac{\cosh2r}{\sinh2r}$. Then
$A=\frac{\cosh r}{\sinh r}I$ on $\mathcal{H}$.
\end{lemma}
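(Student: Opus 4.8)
The plan is to combine the two-case dichotomy recorded above (equal versus distinct principal curvatures on $\mathcal{H}$) with the comparison estimates of Lemma \ref{comp} and a short hyperbolic-trigonometry computation. First I would eliminate the case $\lambda\neq\lambda^{\ast}$. There Berndt's fundamental formula (\ref{fundB}) gives $\lambda\lambda^{\ast}+\kappa=0$; since $\kappa=-1$ on $\mathbb{CH}^{m+1}$ this reads $\lambda\lambda^{\ast}=1$. But Lemma \ref{comp} asserts $\lambda,\lambda^{\ast}>1$, whence $\lambda\lambda^{\ast}>1$, a contradiction. Hence only the case $\lambda=\lambda^{\ast}$ survives, and as already noted this forces $A=\lambda I$ on $\mathcal{H}$ with $\lambda=\tfrac{c}{2m}$.

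Next I would pin down the value of $\lambda$. Since all $2m$ principal curvatures on $\mathcal{H}$ equal $\lambda$, taking the trace gives $c=H_{b}=2m\lambda$. Substituting $\lambda=\lambda^{\ast}=\tfrac{c}{2m}$ into (\ref{qr}) (with $\kappa=-1$) yields $\lambda^{2}=\tfrac{\alpha c}{2m}-1=\alpha\lambda-1$, so $\lambda$ satisfies the quadratic $\lambda^{2}-\alpha\lambda+1=0$, equivalently
\[
\alpha=\lambda+\frac{1}{\lambda}.
\]

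Finally I would match this with the definition of $r$. Using $\cosh 2r=\cosh^{2}r+\sinh^{2}r$ and $\sinh 2r=2\sinh r\cosh r$, one rewrites
\[
\alpha=2\,\frac{\cosh 2r}{\sinh 2r}=\frac{\cosh r}{\sinh r}+\frac{\sinh r}{\cosh r}=\coth r+\frac{1}{\coth r}.
\]
Thus both $\lambda$ and $\coth r$ are positive roots of the same equation $x+x^{-1}=\alpha$, whose two roots have product $1$; in particular at most one of them exceeds $1$. Since $r>0$ we have $\coth r>1$, and by Lemma \ref{comp} we have $\lambda>1$, so these two roots coincide: $\lambda=\coth r=\tfrac{\cosh r}{\sinh r}$, which is the assertion.

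The only substantive step is the elimination of the case $\lambda\neq\lambda^{\ast}$: it depends crucially on the sharp comparison $\lambda,\lambda^{\ast}>1$ of Lemma \ref{comp} together with Berndt's identity, and it is precisely here that the sign $\kappa=-1$ (in contrast to the $\mathbb{CP}^{m+1}$ case) is essential. Everything after that is elementary algebra and the double-angle identities for $\coth$.
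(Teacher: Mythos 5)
Your argument is correct and follows essentially the same route as the paper: rule out $\lambda\neq\lambda^{\ast}$ via Berndt's identity (\ref{fundB}) against the lower bound $\lambda,\lambda^{\ast}>1$ from Lemma \ref{comp}, then solve the quadratic coming from (\ref{qr}) and discard the root $\tanh r<1$. Your version merely makes explicit the double-angle computation $2\cosh 2r/\sinh 2r=\coth r+\tanh r$ and the reason the smaller root is discarded, which the paper leaves as "easily obtain."
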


\begin{proof}
If $\lambda\neq\lambda^{\ast}$ we would have by (\ref{fundB}) $\lambda
\lambda^{\ast}=1$. But by the previous Lemma all eigenvalues are greater than
$1$. Therefore $\lambda=\lambda^{\ast}=\frac{c}{2m}$ and $A=\frac{c}{2m}I$ on
$\mathcal{H}$. Then (\ref{qr}) becomes%
\[
\left(  \frac{c}{2m}\right)  ^{2}=\frac{\alpha c}{2m}-1.
\]
If $\alpha=2\frac{\cosh2r}{\sinh2r}$, then we can easily obtain from the above
equation
\[
\frac{c}{2m}=\frac{\cosh r}{\sinh r}%
\]
as the other root $\sinh r/\cosh r<1$ must be discarded.
\end{proof}

We can now finish the proof. For $a>0$ consider the map $\Phi_{a}%
:\Sigma\rightarrow\mathbb{CH}^{m+1}$ defined by%
\[
\Phi_{a}\left(  p\right)  =\exp_{p}\left(  -a\nu\left(  p\right)  \right)  .
\]
By solving the Jacobi equation along the geodesic $\gamma_{p}\left(  t\right)
=\exp_{p}\left(  -a\nu\left(  p\right)  \right)  $ we have for $u\in
T_{p}\Sigma$%
\[
\left(  \Phi_{a}\right)  _{\ast}u=\left\{
\begin{array}
[c]{cc}%
\left(  \cosh2a-\frac{\cosh2r}{\sinh2r}\sinh2a\right)  U\left(  a\right)  , &
\text{if }u=T\text{,}\\
\left(  \cosh2a-\frac{\cosh r}{\sinh r}\sinh2a\right)  U\left(  a\right)  &
\text{if }u\in\mathcal{H}\text{,}%
\end{array}
\right.
\]
where $U\left(  t\right)  $ denotes the parallel vector field along
$\gamma_{p}$ with $U\left(  0\right)  =u$.

This shows that $\Phi_{r}$ is fully degenerate and hence maps $\Sigma$ to a
point $o$. Therefore $\Sigma$ is the geodesic sphere with center $o$ and
radius $r$.

\begin{itemize}
\item $M=\mathbb{CP}^{m+1}$
\end{itemize}

The discussion is parallel. Let $o\in\mathbb{CP}^{m+1}$ s.t. $\Sigma
\subset\mathbb{CP}^{m+1}\backslash C\left(  o\right)  $. Let $p\in\Sigma$ be a
point on $\Sigma$ s.t. $a:=d\left(  o,p\right)  =\max_{x\in\Sigma}d\left(
o,x\right)  \in\left(  0,\pi/2\right)  $. By a comparison argument similar to
the proof of Lemma \ref{comp} we see that $A\geq\frac{\cos a}{\sin a}I$ on
$\mathcal{H}$ at $p$. This implies that both $\lambda$ and $\lambda^{\ast}$
are positive. As $\kappa=1$ the identity (\ref{fundB}) is impossible.
Therefore $\lambda=\lambda^{\ast}=\frac{c}{2m}$ and $A=\frac{c}{2m}I$ on
$\mathcal{H}$.

Let $r\in\left(  0,\pi/2\right)  $ be the number s.t. $\lambda=\cos r/\sin r$.
From (\ref{qr}) we easily obtain $\alpha=2\cos2r/\sin2r$. By a similar
argument as in the hyperbolic case we conclude that $\Sigma$ is a geodesic
sphere of radius $r$.

As a corollary, we obtain the following uniqueness theorem for Hopf
hypersurfaces of constant mean curvature.

\begin{theorem}
Let $M$ be a simply connected Kahler manifold of constant holomorphic
sectional curvature $4\kappa$ with $\dim_{\mathbb{C}}M=m+1\geq2$. In other
words $M=\mathbb{CP}^{m+1}$ if $\kappa=1$; $M=\mathbb{CH}^{m+1}$ if
$\kappa=-1$. Let $\Sigma$ be a closed, embedded hypersurface in $M$. When
$M=\mathbb{CP}^{m+1}$ we further assume that $\Sigma$ is disjoint from a
hyperplane. If $\Sigma$ has constant mean curvature and is Hopf, then it is a
geodesic sphere.
\end{theorem}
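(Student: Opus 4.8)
The plan is to deduce this statement as a corollary of the rigidity Theorem~\ref{rig}: I will verify that the two hypotheses, constant mean curvature and the Hopf condition, force equality in the isoperimetric inequality of Theorem~\ref{iso}, and then quote Theorem~\ref{rig} to identify $\Sigma$ as a geodesic sphere. First I fix the enclosed domain $\Omega$. In $\mathbb{CH}^{m+1}$, which is diffeomorphic to a Euclidean ball, the closed embedded hypersurface $\Sigma$ separates $M$ into a bounded and an unbounded region, and I take $\Omega$ to be the bounded one. In $\mathbb{CP}^{m+1}$ I use the hypothesis that $\Sigma$ is disjoint from a hyperplane $C(o)$: I work inside $\mathbb{CP}^{m+1}\setminus C(o)$ and let $\Omega$ be the component of the complement of $\Sigma$ disjoint from $C(o)$. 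This choice also guarantees that the potential $\Phi$ ($=\log\cosh r$ in the hyperbolic case, $=\log\cos r$ in the projective case) is smooth on $\overline{\Omega}$, with $D^{1,1}\Phi=I$ and hence $\Delta\Phi=2\square\Phi=2(m+1)$.

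The crucial point is that $H_b$ is actually constant. Since $\Sigma$ is Hopf we have $AT=\alpha T$ with $\alpha=\Pi(T,T)$, and for a Hopf hypersurface in a complex space form $\alpha$ is a constant (the classical fact recalled in the introduction, cf.~\cite{NR}). As the mean curvature $H$ is constant by hypothesis, $H_b=H-\Pi(T,T)=H-\alpha$ is constant; write $c$ for this value, so that $c=\inf_\Sigma H_b$.

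To produce the equality I feed $\Phi$ into the Minkowski formula. Because $\Sigma$ is Hopf, Proposition~\ref{MFP} (for $\mathbb{CH}^{m+1}$) or Proposition~\ref{MFP1} (for $\mathbb{CP}^{m+1}$) gives $2m|\Sigma|=\int_\Sigma H_b\langle\nabla\Phi,\nu\rangle$. Since $H_b\equiv c$ is constant it pulls out of the integral, and the divergence theorem together with $\Delta\Phi=2(m+1)$ converts the boundary integral into a volume integral,
\[
\int_\Sigma\langle\nabla\Phi,\nu\rangle=\int_\Omega\Delta\Phi=2(m+1)|\Omega|.
\]
Hence $2m|\Sigma|=2c(m+1)|\Omega|$, that is, $m|\Sigma|=c(m+1)|\Omega|$. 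As the left side and $|\Omega|$ are strictly positive this also forces $c>0$, and the identity is exactly the equality case of the isoperimetric inequality~(\ref{iso-int}). Theorem~\ref{rig} then shows that $\Omega$ is a geodesic ball, so $\Sigma=\partial\Omega$ is a geodesic sphere.

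I expect no substantive obstacle here; the difficulties are matters of bookkeeping. The one point to handle with care is that the Minkowski formula and the divergence theorem be applied with the same outer normal $\nu$ of $\Omega$, so that the two boundary integrals really combine to give $2m|\Sigma|=2c(m+1)|\Omega|$; and in the projective case one must check that disjointness from $C(o)$ is precisely what makes $\Phi=\log\cos r$ globally smooth on $\overline{\Omega}$, which is where that extra hypothesis is used. The only external input is the classical constancy of $\alpha$ for Hopf hypersurfaces; granting that, the result is an immediate consequence of the machinery already in place.
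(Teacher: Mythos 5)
Your proposal is correct and follows essentially the same route as the paper: constancy of $\alpha$ for a Hopf hypersurface gives $H_b\equiv c$, the Minkowski formula of Proposition~\ref{MFP} (resp.~\ref{MFP1}) combined with the divergence theorem and $\Delta\Phi=2(m+1)$ yields the equality $m|\Sigma|=c(m+1)|\Omega|$, and Theorem~\ref{rig} then identifies $\Omega$ as a geodesic ball. The extra care you take with the choice of $\Omega$, the smoothness of $\Phi$ on $\overline{\Omega}$, and the positivity of $c$ matches what the paper does implicitly.
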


\begin{remark}
\bigskip A tube over a totally geodesic $\mathbb{CP}^{k}$ ($0<k<m$) in
$\mathbb{CP}^{m+1}$ discussed above shows that the extra condition that
$\Sigma$ is disjoint from a hyperplane is necessary.
\end{remark}

\begin{proof}
Since $\Sigma$ is Hopf, $\alpha=\Pi\left(  T,T\right)  $ is constant. As
$H_{b}=H-\alpha\,$\ and $H$ is constant, we see that $H_{b}$ is constant.
Suppose $H_{b}=c\,$. Let $\Omega$ be the domain enclosed by $\Sigma$ (in the
case of $\mathbb{CP}^{m+1}$ we take $\Omega$ to be the one disjoint from the
same hyperplane). Since $\Sigma$ is Hopf we have by Proposition \ref{MFP} and
Proposition \ref{MFP1} (using the same notation there)%
\begin{align*}
2m\left\vert \Sigma\right\vert  &  =\int_{\Sigma}H_{b}\left\langle \nabla
\Phi,\nu\right\rangle \\
&  =c\int_{\Sigma}\left\langle \nabla\Phi,\nu\right\rangle \\
&  =c\int_{\Omega}\Delta\Phi\\
&  =2\left(  m+1\right)  c\left\vert \Omega\right\vert ,
\end{align*}
i.e. $m\left\vert \Sigma\right\vert =\left(  m+1\right)  c\left\vert
\Omega\right\vert $. Therefore $\Sigma$ is a geodesic sphere by Theorem
\ref{rig}.
\end{proof}


\begin{thebibliography}{999}                                                                                              %


\bibitem[A]{A}A. D. Alexandrov, Uniqueness theorems for surfaces in the large.
V. (Russian. English summary) Vestnik Leningrad. Univ. 13 1958 no. 19, 5--8.

\bibitem[AL]{AL}B. Andrews and H. Li, Embedded constant mean curvature tori in
the three-sphere. arXiv: 1204.5007

\bibitem[B]{B}J. Berndt, Real hypersurfaces with constant principal curvatures
in complex hyperbolic space. J. Reine Angew. Math. 395 (1989), 132--141.

\bibitem[Br1]{Br1}S. Brendle, Constant mean curvature surfaces in warped
product manifolds. Publ. Math. Inst. Hautes \'{E}tudes Sci. 117 (2013), 247--269.

\bibitem[Br2]{Br2}S. Brendle, Embedded minimal tori in S3 and the Lawson
conjecture. Acta Math. 211 (2013), no. 2, 177--190.

\bibitem[Br3]{Br3}S. Brendle, Embedded Weingarten tori in $S^3$.
Adv. Math. 257 (2014), 462--475. 

\bibitem[CR]{CR}T. Cecil and P. Ryan, Focal sets and real hypersurfaces in
complex projective space. Trans. Amer. Math. Soc. 269 (1982), 481--498.

\bibitem[HW]{HW}F. Hang and X. Wang, Vanishing sectional curvature on the
boundary and a conjecture of Schroeder and Strake. Pacific J. Math. 232
(2007), no. 2, 283--287.

\bibitem[H]{H}L. H\"{o}rmander, An introduction to complex analysis in several
variables. Third Edition (Revised). North-Holland Mathematical Library, Vol.
7. North-Holland Publishing Co.

\bibitem[I]{I}R. Ichida, Riemannian manifolds with compact boundary. Yokohama
Math. J. 29 (1981), no. 2, 169--177.

\bibitem[K]{K}M. Kimura, Real hypersurfaces and complex submanifolds in
complex projective space. Trans. Amer. Math. Soc. 296 (1986), no. 1, 137--149.

\bibitem[KN]{KN}S. Kobayashi; K. Nomizu, Foundations of differential geometry.
Vol. II. Reprint of the 1969 original. Wiley Classics Library. A
Wiley-Interscience Publication. John Wiley \& Sons, Inc., New York, 1996.

\bibitem[KR]{KR}K. J. Kohn, J. J.; H. Rossi, On the extension of holomorphic
functions from the boundary of a complex manifold. Ann. of Math. (2) 81 (1965), 451--472.

\bibitem[MW]{MW}P. Miao and X. Wang, Boundary effect of Ricci curvature. Preprint.

\bibitem[M]{M}V. Miquel, Compact Hopf hypersurfaces of constant mean curvature
in complex space forms. Ann. Global Anal. Geom. 12 (1994), no. 3, 211--218.

\bibitem[MR]{MR}S. Montiel and A. Ros, Compact hypersurfaces: the Alexandrov
theorem for higher order mean curvatures, in H. Blaine Lawson Jr., K.
Tenenblat (eds.), Differential Geometry, Pitman Monographs and Surveys in Pure
and Applied Mathematics, vol. 52, pp. 279--296, Longman, Harlow, 1991

\bibitem[NR]{NR}R. Niebergall; P. Ryan, Real hypersurfaces in complex space
forms. Tight and taut submanifolds (Berkeley, CA, 1994), 233--305, Math. Sci.
Res. Inst. Publ., 32, Cambridge Univ. Press, Cambridge, 1997.

\bibitem[R]{R}R. Reilly, Applications of the Hessian operator in a Riemannian
manifold, Indiana Univ. Math. J. 26 (1977), 459--472.

\bibitem[Ro]{Ro}A. Ros, Compact hypersurfaces with constant higher order mean
curvatures. Rev. Mat. Iberoamericana 3 (1987), no. 3-4, 447--453.
\end{thebibliography}
\end{document}